\newcommand{\calD}{\mathcal{D}}
\newcommand{\mC}{\mathbb{C}}
\newcommand{\mD}{\mathbb{D}}
\newcommand{\mF}{\mathbb{F}}
\newcommand{\mN}{\mathbb{N}}
\newcommand{\mR}{\mathbb{R}}
\newcommand{\mT}{\mathbb{T}}
\newcommand{\mZ}{\mathbb{Z}}
\newtheorem{theorem}{Theorem}[section]
\newtheorem{lemma}[theorem]{Lemma}
\newtheorem{corollary}[theorem]{Corollary}
\newtheorem{proposition}[theorem]{Proposition}
\theoremstyle{definition}
\newtheorem{remark}[theorem]{Remark}
\theoremstyle{definition}
\newtheorem{definition}[theorem]{Definition}
\theoremstyle{definition}
\theoremstyle{definition}
\newtheorem{example}[theorem]{Example}
\newtheorem{observation}[theorem]{Observation}
\def\ord{\operatorname{ord}}
\def\T{ \mathbb T}
\def\H{H^\infty}
\def\D{{ \mathbb D}}
\def\C{{ \mathbb C}}
\def\N{{ \mathbb N}}
\def\inter{\cap}
\def\Inter{\bigcap }
\def\ov{\overline}
\def\ss{\subseteq}
\def\emp{\emptyset}
\def\buildrel#1_#2^#3{\mathrel{\mathop{\kern 0pt#1}\limits_{#2}^{#3}}}
\def\ssi{\Longleftrightarrow}
\begin{document}

\keywords{function algebras, Krull Intersection Theorem}

\subjclass[2010]{46E25 (primary), and 13A15, 13G05, 46J15 (secondary)}

\title[On the Krull Intersection Theorem]{On the Krull Intersection Theorem in Function Algebras}
\author[R. Mortini]{Raymond Mortini}
\address{
Universit\'{e} de Lorraine\\ D\'{e}partement de Math\'{e}matiques et  
Institut \'Elie Cartan de Lorraine,  UMR 7502\\
 Ile du Saulcy\\
 F-57045 Metz, France} 
 \email{Raymond.Mortini@univ-lorraine.fr}
\author[R. Rupp]{Rudolf Rupp}
\thanks{Part of this work was done while the 
second author enjoyed a sabbatical. 
He wants to thank TH N\"urnberg for it.}
\address{Fakult\"at f\"ur Angewandte Mathematik, Physik  und Allgemeinwissenschaften\\
\small TH-N\"urnberg\\
\small Kesslerplatz 12\\
\small D-90489 N\"urnberg, Germany}
\email{Rudolf.Rupp@th-nuernberg.de}
\author[A. Sasane]{Amol Sasane}
\address{Department of Mathematics \\London School of Economics\\
    Houghton Street\\ London WC2A 2AE\\ United Kingdom}
\email{sasane@lse.ac.uk}

\begin{abstract} 
A version of the Krull Intersection Theorem states that for 
Noetherian domains   the 
{\em Krull intersection} $ki(I)$ of  every proper ideal $I$ is trivial; that is 
$$
ki(I):=\displaystyle\bigcap_{n=1}^\infty I^n = \{0\}.
$$
We investigate the validity of this result for various function
algebras $R$, present  ideals $I$ of $R$ for which $ ki(I)\neq
\{0\}$, and give conditions on $I$ so that $ki(I)=\{0\}$.
\end{abstract}

\maketitle

\section{Introduction}

\noindent The aim of this note is to investigate the validity of the
Krull Intersection Theorem in various function algebras.  We begin by
recalling the following version of the Krull Intersection Theorem
\cite[Corollary~5.4, p.152]{E}. See also \cite{Per} for a simple proof. 
As usual, given an ideal $I$, $I^n$ is the ideal of all elements  of the form
$$
\sum_{i=1}^m a_{1,i}\cdots a_{n,i}, \quad m\in \N, \;a_{k,i}\in I.
$$

\begin{proposition}[Krull Intersection Theorem] 
\label{propo_KIT}
If $R$ is a Noetherian integral domain, and $I$  a proper ideal 
 of $R$, that is $I\subsetneq R$, then the {\em Krull intersection} 
$ki(I)$ of $I$, defined by
$$
ki(I):=\displaystyle\bigcap_{n=1}^\infty I^n ,
$$
is trivial, that is, $ki(I)=\{0\}$.
\end{proposition}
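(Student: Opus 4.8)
The plan is to give the classical proof via the Artin--Rees lemma together with Nakayama's lemma. Set $J := ki(I) = \bigcap_{n=1}^\infty I^n$, and observe at the outset that $J$ is a finitely generated ideal because $R$ is Noetherian; this finite generation is precisely what makes Nakayama applicable below.

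First I would apply the Artin--Rees lemma to the $I$-adic filtration of $R$ and the submodule $J \subseteq R$: it yields an integer $k \geq 1$ such that $I^n \cap J = I^{n-k}(I^k \cap J)$ for all $n \geq k$. Specializing to $n = k+1$ and using that $J \subseteq I^k$ and $J \subseteq I^{k+1}$, both intersections collapse and one obtains $J = I \cdot J$. Then I would invoke Nakayama's lemma in its ``determinant trick'' form: since $J$ is finitely generated and $IJ = J$, there is an element $a \in I$ with $(1-a)\,j = 0$ for every $j \in J$. Because $I$ is a proper ideal we have $1 \notin I$, hence $1 - a \neq 0$; and since $R$ is an integral domain, $(1-a)\,j = 0$ forces $j = 0$. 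Therefore $J = \{0\}$, which is the assertion.

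I expect the only real obstacle to be the Artin--Rees lemma itself, whose proof is not a one-liner: one passes to the Rees algebra $\bigoplus_{n \geq 0} I^n$, checks that it is Noetherian (via the Hilbert basis theorem, using that $I$ is finitely generated), and then runs a finiteness argument for graded modules over it. Everything surrounding this input is elementary.

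If one prefers to avoid Artin--Rees, there is a self-contained alternative in the spirit of Perdry. Write $I = (a_1, \dots, a_s)$ and fix $x \in J$; for each $n$ there is a homogeneous polynomial $f_n \in R[X_1, \dots, X_s]$ of degree $n$ with $x = f_n(a_1, \dots, a_s)$. The ascending chain $(f_1) \subseteq (f_1, f_2) \subseteq \cdots$ in the Noetherian ring $R[X_1, \dots, X_s]$ stabilizes, so $f_{N+1} = \sum_{i=1}^N g_i f_i$ for some $g_i \in R[X_1,\dots,X_s]$; comparing homogeneous components we may take each $g_i$ homogeneous of degree $N+1-i \geq 1$. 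Evaluating at $(a_1, \dots, a_s)$ gives $x = \big(\sum_{i=1}^N g_i(a_1,\dots,a_s)\big)\, x$ with $\sum_i g_i(a_1,\dots,a_s) \in I$, and one concludes exactly as before using that $R$ is a domain and $I \neq R$.
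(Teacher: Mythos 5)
Your argument is correct. Note, however, that the paper does not prove this proposition at all: it is quoted as a known result, with references to Eisenbud \cite{E} for the statement and to Perdry \cite{Per} for a simple proof. Your main route (Artin--Rees applied to $J=\bigcap_n I^n$ to get $J=IJ$, then the determinant-trick form of Nakayama to produce $a\in I$ with $(1-a)J=\{0\}$, and finally $1-a\neq 0$ plus the domain hypothesis to conclude $J=\{0\}$) is the standard textbook proof in the spirit of \cite{E}, and each step is used correctly -- in particular you rightly flag that the finite generation of $J$, guaranteed by Noetherianity, is what licenses Nakayama, and that properness of $I$ gives $1-a\neq 0$. Your sketched alternative is essentially verbatim the elementary argument of \cite{Per}: writing $x\in J$ as $f_n(a_1,\dots,a_s)$ with $f_n$ homogeneous of degree $n$, stabilizing the chain $(f_1)\subseteq(f_1,f_2)\subseteq\cdots$ in $R[X_1,\dots,X_s]$, and extracting the relation $x=cx$ with $c\in I$. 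So you have in effect reconstructed both of the proofs the paper points to; the only cost of the first route is the Artin--Rees lemma as a black box, which the second route avoids at the price of the Hilbert basis theorem for $R[X_1,\dots,X_s]$.
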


\noindent We note that neither of the assumptions on $R$ can be
dropped. Here are some examples.

\begin{example}[Not Noetherian, and not an integral domain] 
This is based on \cite[p.153]{E}.  Let $R=C^\infty(\mR)$, 
the ring of all infinitely differentiable real-valued functions on
$\mR$.  Then $R$ is not Noetherian (since
$$
I_n:=\{f\in C^\infty(\mR): f(x)=0 \text{ for }x>n\}, \quad 
n\in \mN:=\{1,2,3,\cdots\},
$$
form an ascending chain of ideals) and is not an integral domain. Let
$I$ be the ideal $\langle x\rangle$ generated by $x\mapsto x$.  Let
$$
f(x):=\left\{\begin{array}{ll}
e^{-1/x} & \textrm{if }x>0,\\
0 &\textrm{otherwise}.
\end{array}\right.
$$
Then $f\in C^\infty(\mR)$. For $n\in \mN$, set
$$
f_n(x)=\left\{\begin{array}{ll}
f(x)/x^n & \textrm{if }x>0,\\
0 &\textrm{otherwise}.
\end{array} \right.
$$
Then $f_n\in C^\infty(\mR)$ too, and so $f=f_n x^n\in I^n$. So we have
$0\neq f\in ki(I)$.  \hfill$\Diamond$
\end{example}

\begin{example}[Not Noetherian, but an integral domain]
\label{example_entire}
Let 
$$
R=H(\mC)=\{f:\mC\rightarrow \mC:f\textrm{ is entire}\}.
$$
Denote the zero-set of  a function $f\in H(\mC)$ by 
$$Z(f)=\{z\in \mC: f(z)=0\}.$$
If $z_0\in Z(f)$, let $\ord(z_0,f)$ be the order of $z_0$ as a zero of $f$.  
Define
$$
I:=\{0\}\cup\left\{f\in H(\mC)\Big|\; \begin{array}{ll} 
\exists N\in \mN, \forall n\in \mN: {\rm if}\; n>N,\;  {\rm then}  \;f(n)=0 \\ {\rm and}\; 
\displaystyle \lim_{n\rightarrow \infty} \ord(n,f)=\infty 
\end{array}\right\}.
$$
Then it can be seen that $I$ is an ideal.  We will show that
$I^2=I$. To this end, let $0\neq f\in I$. Let $f_1\in H(\mC)$  be
an entire function  with $Z(f_1)=Z(f)\inter \N$, but such that for each $n\in Z(f_1)$,
$$
\ord(n,f_1):=\max\left\{ 1,\Big\lfloor \frac{\ord(n,f)}{2}\Big\rfloor \right\}.
$$
Here for $x\in \mR$, the notation $\lfloor x\rfloor$ stands for the
largest integer $\leq x$.  Then $f_1\in I$. Set $f_2=f/f_1$. Then
$f_2\in I$ as well. Finally, $f=f_1 \cdot f_2\in I\cdot I=I^2$.
\hfill$\Diamond$
\end{example}

\begin{example}[Noetherian, but not an integral domain] 
The ring $\mC[z]$ is Noetherian, and so it follows that
$R:=\mC[z]/\langle z^2 -z\rangle$ is Noetherian too. But $R$ is not an
integral domain because
 $
[z][z-1]=[0].
$ 
With $I:=\langle [z]\rangle $, the ideal generated by $[z]$ in $R$, we
see that $I^2=I$,   because $[z]$ is an idempotent, and so 
$I=\langle [z]\rangle =\langle [z]^2\rangle  \ss I^2 \ss I$. \hfill$\Diamond$
\end{example}

\begin{example}[Sequence spaces] 
Consider the sequence algebras 
\begin{eqnarray*}
\ell^2 &:=&\Big\{(a_n)_{n\in \mZ}\;\Big|\; \sum_{n\in\mZ} |a_n|^2 < \infty\Big\},\\
\ell^\infty &:=& \Big\{ (a_n)_{n\in \mZ}\;\Big|\;  \sup_{n\in \mZ}|a_n|<\infty\Big\},\\
s'(\mZ)&:=& \Big\{ (a_n)_{n\in \mZ}\;\Big|\;  \exists M>0,\;\exists m>0, 
\;\forall n\in \mZ: \; |a_n|\leq M(1+|n|)^m\Big\}, 
\end{eqnarray*}
endowed with termwise addition, termwise scalar multiplication, and
termwise (Hadamard) multiplication.  Then for any of the above
algebras $R$, $I:=c_{00}$, the set of all sequences with compact
support, is a proper ideal in $R$. If $a:=(a_1,\cdots,
a_N,0,\cdots)\in c_{00}$, then with $b_n$ any complex number such that
$b_n^2=a_n$, $n=1,\cdots ,N$, and with
 $ 
b:=(b_1,\cdots, b_N,0,\cdots)\in c_{00}=I,
$ 
we have that $I\owns a=b\cdot b\in I^2$. So $I^2=I$, and hence
$ki(I)=I\neq \{0\}$. We remark that $\ell^2(\mZ)$ with the termwise
operations is isomorphic to $L^2(\mT)$ with convolution, where
$\mT:=\{z\in \mC:|z|=1\}$, and $s'(\mZ)$ with termwise operations is
isomorphic to the algebra of periodic distributions $\calD'(\mT)$ with
convolution.  \hfill$\Diamond$
\end{example}

\noindent Here is an example of a non-Noetherian ring for which $\bigcap_{n=1}^\infty I^n=\{0\}$
for every proper ideal $I$. (This example is included in \cite[Theorem~4]{And}, 
but we offer an elementary direct proof below.) 

\begin{example}[Non-Noetherian, but $\displaystyle \bigcap_{n=1}^\infty I^n=\{0\}$ 
for all ideals $I\subsetneq  R$] 

Let $I$ be a proper ideal in $R:=\mC[z_1, z_2, z_3,\cdots]$.
Then $I$ is contained in some maximal ideal $M$ of $R$.
But then $ki(I) \subseteq ki(M)$. We will show that the maximal ideals $M$ 
of $R$ are just of the form $\langle z_n - \zeta_n:  n \in \mN\rangle$ for some 
sequence of complex numbers $\zeta_1,\zeta_2,\zeta_3,\cdots$.
 Then  we will use this structure to show $ki(M)=\{0\}$, and hence we can conclude that 
 also $ki(I)=\{0\}$ for every proper ideal $I$ in $R$. 
 
 \medskip

\noindent {\bf Claim:} $M$ is a maximal ideal in 
$ \mC[z_1,z_2,z_3,\cdots]$ if and only if there exists a sequence 
$\zeta_1, \zeta_2,\zeta_3,\cdots$ of complex numbers such that 
$$
M=\langle z_n-\zeta_n: n\in \mN\rangle.
$$
(This result is known; see \cite{Lan}. Nevertheless, we include an elementary self-contained 
proof, fashioned along the same lines as the proof of Hilbert's Nullstellensatz; see 
\cite{Ver}.) 

\medskip 

\noindent If $\bm{\zeta}=(\zeta_n)_{n\in \mN}$ is a sequence of complex numbers, then 
 we first observe that the ideal $M_{\bm{\zeta}}:=\langle z_n-\zeta_n: n\in \mN\rangle$ is 
maximal as follows. We can look at the evaluation homomorphism $\varphi_{\bm{\zeta}}$ 
 from $R:=\C[z_1,z_2,z_3,\cdots]$ to $\C$ sending 
 for every $n\in\N$ the indeterminate $z_n$ to $\zeta_n$. 
 Then $\varphi_{\bm{\zeta}}$  is surjective, and  
  $
 \ker \varphi_{\bm{\zeta}}
 = \{p \in R: p(\bm{\zeta})=0\}.
 $ 
 But as $p(\bm{\zeta})=0$, it follows from 
 the Taylor series centered at $\bm{\zeta}$ for  $p$ that $p$ belongs to $M_{\bm{\zeta}}$. 
 Hence by the isomorphism theorem $R/M_{\bm{\zeta}}$ is isomorphic to $\mC$, and 
 thus $M_{\bm{\zeta}}$ is maximal.

\medskip 

\noindent 
Now suppose that $M$ is maximal. 
Let $k\in \mN$, and consider the ring homomorphism  
$$
\begin{array}{rcl}
\varphi_k : \mC[z_k]&\longrightarrow& \mC[z_1,z_2,z_3,\cdots]/M=:\mF\\
p& \longmapsto & [p]:=p+M.
\end{array}
$$
If $\varphi_k(pq)=0$, then since $\mF$ is a field, either $\varphi_k(p)=0$ or $\varphi_k(q)=0$. 
Hence $\ker \varphi_k$ is a prime ideal of $\mC[x_k]$. We will show first that $\ker \varphi_k\neq\{0\}$. 

Suppose that $\ker \varphi_k=\{0\}$. Then $\varphi_k:\mC[z_k]\rightarrow \mF$ 
is an injective map. Thus there exists an extension of $\varphi_k$ to $\Phi_k:\mC(z_k)\rightarrow \mF$, 
namely 
$$
\Phi_k\Big(\Big[\frac{p}{q}\Big]\Big)=\frac{\varphi_k(p)}{\varphi_k(q)}, \quad 
\Big[\frac{p}{q}\Big]\in \mC(z_k), \;p\in \mC[z_k],\; q\in \mC[z_k]\setminus \{0\}.
$$
It is straightforward to check that $\Phi_k$ is an injective homomorphism.
Now
$\mF$ is a $\mC$-vector space which is spanned by a countable number of elements: 
namely $m+M$, where $m$ is any monomial in $\mC[z_1,z_2,z_3,\cdots]$.
Hence its 
subspace, namely $\Phi_k(\mC(z_k))$ is also spanned by a countable sequence of vectors, say 
$\{v_n:n\in \mN\}$. As these $v_n$ are in $\Phi_k(\mC(z_k))$, there exist $r_n\in \mC(z_k)$ 
such that $\Phi_k(r_n)=v_n$. But then 
$\mC(z_k)$ will be spanned by the $r_n$: indeed, if $r\in \mC(z_k)$, then $\Phi_k(r)\in \mC(z_k)$ 
and so there exist $\alpha_1,\cdots,\alpha_m\in \mC$ such that 
 $
\Phi_k(r)= \alpha_1 v_{\ell_1}+\cdots+ \alpha_m v_{\ell_m}
=\Phi_k(  \alpha_1 r_{\ell_1}+\cdots+ \alpha_m r_{\ell_m}),
$ 
and so $r=\alpha_1 r_{\ell_1}+\cdots+ \alpha_m r_{\ell_m}$, thanks to the injectivity of $\Phi_k$. 
 So  the $\mC$-vector space $\mC(z_k)$ is also spanned by a countable number of vectors. 
However, it is easy to see that 
$$
\left\{z_k\mapsto \frac{1}{z_k-\zeta}: \zeta\in \mC\right\},
$$
is an uncountable linear independent set
 in $\mC(z_k)$, a contradiction. Hence $\ker \varphi_k\neq \{0\}$. 
The kernel of $\varphi_k$ is proper, because $\varphi_k(1)=[1]$
(note that  due to $\varphi_k(1)=\varphi_k(1^2)$, the only other possibility would be  $\varphi_k(1)=[0]$, giving $1+M =[0]$;  a contradiction).
Since $\ker\varphi_k$ is a nonzero proper prime ideal in $\mC[x_k]$, it follows that there is a $\zeta_k\in \mC$ 
such that $z_k-\zeta_k\in \ker \varphi_k$. Hence $z_k-\zeta_k\in M$. 

As the choice of $k\in \mN$ was arbitrary, we get a sequence $
\zeta_1, \zeta_2,\zeta_3,\cdots$ of complex numbers such that 
 $
z_n-\zeta_n\in M, 
$ 
and so $\langle z_n-\zeta_n: n\in \mN\rangle\subseteq M$. But $\langle z_n-\zeta_n: n\in \mN\rangle$ is 
maximal. Thus  $M=\langle z_n-\zeta_n: n\in \mN\rangle$. 
This completes the proof of the claim.

\medskip 

\noindent 
Let $M=\langle z_n-\zeta_n:  n \in \mN\rangle$. 
 Suppose that $0 \neq f \in ki(M)$, and let $k$ be such that $f\in  \mC[z_1,\cdots ,z_k]$. 
Let $\Pi_k:\mC[z_1,z_2,z_3,\cdots]\rightarrow \mC[z_1,\cdots, z_k]$ be the evaluation homomorphism that sends $z_n$ to $\zeta_n$ for $n>k$, and 
$z_n$ to $z_n$ itself if $n \leq k$. Then 
$$
f \in \bigcap_{n=1}^\infty \big(\Pi_k(M)\big)^n.
$$
 Since $\Pi_k(M)$ is just the maximal ideal $\langle z_n-\zeta_n: 0 \leq n\leq k\rangle $
in the ring $\mC[z_1,\cdots,z_k]$,  we conclude from  the Krull Intersection Theorem (Proposition~\ref{propo_KIT}) 
applied to the Noetherian integral domain $\mC[z_1,\cdots,z_k]$, that  $f=0$. Consequently,  $ki(M)=\{0\}$. 
\hfill$\Diamond$
\end{example}

\noindent In this rest of this article, we will investigate $ki(I)$ for (mainly maximal) ideals  $I$ in 
 algebras of (mainly holomorphic) functions. The organization of the subsequent sections is as follows:
\begin{enumerate}
  \item In Section~\ref{section_1}, we will determine $ki(I)$ for certain ideals in the 
  algebra $H(D)$ of holomorphic functions in a domain $D\subseteq\mC$. 
  \item In Section~\ref{section_3}, we will determine $ki(I)$ for certain ideals in uniform algebras. 
  \item In Section~\ref{section_4}, we will determine $ki(I)$ for certain ideals in the algebra 
  $\H(\D)$ of bounded holomorphic functions in the unit disk $\D$.
\end{enumerate}

\section{$ki(I)$ for ideals $I$ in $H(D)$}
\label{section_1}

\noindent Example~\ref{example_entire} above can be generalized to the
following.

\begin{example}
Let $D\subseteq\mC$ be a domain (that is, an open path-connected set). 
Let $R=H(D)$, the algebra of holomorphic functions in $D$ with
pointwise operations. Then there exists a proper ideal $I$ of $R$ such
that $ki(I)\neq \{0\}$. We construct such an ideal $I$ as follows.
Let $(\zeta_n)_{n\in \mN}$ be any sequence in $D$ that converges to a
point in the boundary $\partial D$ of $D$ (or more generally, without
accumulation points in $D$). Let $h$ be any Weierstrass product with
simple zeros at $\zeta_n$, $n\in \mN$. Consider the proper ideal $I$
of $R$ generated by the functions $f_n$, $n\in \mN$, given by
$$
f_n(z):=\frac{h(z)}{(z-\zeta_1)\cdots (z-\zeta_n)}, \quad z\in D.
$$
Let $g\in R\setminus \{0\}$ be a Weierstrass product which vanishes
exactly at $\zeta_n$ of order $\ord(g,\zeta_n)=n$, for each $n\in \mN$.
We claim that $g\in I^n$ for $n\in \mN$. For $n\in \mN$, set
$$
q_n:=\prod_{k=1}^n (z-\zeta_k)^k.
$$
Then $\displaystyle G:=g/q_n $ has the zero set
 $
Z_D(G)=\{\zeta_{n+1}, \zeta_{n+2},\cdots \},
$  
with orders of zeros $\zeta_k$ given by $\ord(G,\zeta_k)=k$, for $k> n$.
Again by the Weierstrass' Factorization Theorem, we must have that
$f_n^n$ divides $G $ in $H(D)$, and hence there exist $h_n\in H(D)$
such that $g=q_n G=q_n h_n f_n^n\in I^n$.  Since $n\in \mN$  was 
arbitrary, $g \in ki(I)$. \hfill$\Diamond$
\end{example}

\noindent On the other hand, we have the following result saying that
for {\em non-free}/{\em fixed} ideals $I$ of $H(D)$, $ki(I)=\{0\}$.
 
\begin{definition}[Free ideals in $H(D)$] 
Let $D\subseteq\mC$ be a domain.  For an element  $f\in H(D)$, 
let $Z_D(f)$ denote the set of zeros of $f$.  An ideal $I$ in $H(D)$
is called {\em free} if the {\em zero set} of $I$,
$$
Z_D(I):=\bigcap_{f\in I} Z_D(f)
$$
is the empty set $\emptyset$, and {\em fixed/non-free} otherwise.
\end{definition}
 
\begin{proposition}
For a proper fixed ideal $I$ of $H(D)$, $ki(I)=\{0\}$.
\end{proposition}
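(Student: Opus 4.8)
The plan is to reduce the problem to a single point and then invoke the finite-dimensional (one-variable) Krull Intersection Theorem via a local analysis at that point. Since $I$ is fixed, the zero set $Z_D(I)$ is nonempty; fix a point $p\in Z_D(I)$, so that every $f\in I$ vanishes at $p$. Suppose for contradiction that there is some $g\in ki(I)$ with $g\neq 0$. Then $g=0$ at $p$ (as $g\in I\subseteq I^1$ and $I$ annihilates $p$), and since $g$ is holomorphic and not identically zero, $g$ has a finite order of vanishing $m:=\ord(p,g)\in\mN$ at $p$.

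The key step is to control the order of vanishing at $p$ of elements of $I^n$. Every element of $I^n$ is a finite sum of $n$-fold products $a_1\cdots a_n$ with each $a_j\in I$, hence each $a_j$ vanishes at $p$, so $\ord(p,a_1\cdots a_n)=\sum_j\ord(p,a_j)\ge n$. Taking a sum of such products, $\ord(p,\cdot)\ge n$ as well (the lowest-order term cannot cancel below order $n$). Therefore every $h\in I^n$ satisfies $\ord(p,h)\ge n$. Applying this with $n=m+1$ to $g\in ki(I)\subseteq I^{m+1}$ gives $\ord(p,g)\ge m+1>m$, contradicting $m=\ord(p,g)$. Hence $ki(I)=\{0\}$.

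I expect the main (and only genuine) obstacle to be the careful justification that $\ord(p,\cdot)$ does not drop under taking finite sums — i.e., that $\ord(p,h_1+h_2)\ge\min\{\ord(p,h_1),\ord(p,h_2)\}$ — together with the fact that a product of functions each vanishing at $p$ has order at least the number of factors. Both facts are standard from the power series expansion of holomorphic functions at $p$ and are local, so the argument never needs the global structure of $D$ beyond connectedness (which guarantees that a nonzero holomorphic function has a well-defined finite order of vanishing at each point, rather than being identically zero on a component). Notably, this argument does \emph{not} even require the full one-variable Krull Intersection Theorem; the order-of-vanishing bookkeeping suffices directly. One should also remark that properness of $I$ is not really used here beyond ensuring $1\notin I$ (which is automatic once $Z_D(I)\neq\emptyset$), so the hypothesis ``proper'' in the statement is harmless.
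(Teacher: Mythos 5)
Your proof is correct and follows essentially the same route as the paper: pick a common zero $p\in Z_D(I)$, observe that every element of $I^n$ vanishes at $p$ to order at least $n$ (the paper uses the slightly sharper bound $mn$ with $m=\min_{f\in I}\ord(f,p)$), and conclude from the identity theorem on the domain $D$ that a nonzero member of $ki(I)$ is impossible. The order bookkeeping for products and sums that you flag as the only obstacle is exactly the standard fact the paper also relies on, so no changes are needed.
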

\begin{proof} 
Suppose that $\zeta\in Z_D(I)$. Let 
$$m:= \min\{\ord(f,\zeta): f\in I\}.
$$
Then $m\geq 1$
 and each function $g\in I^n$ has $\zeta $ as a
zero of order at least $ mn$. But any holomorphic function belonging
to $I^n$ for all $n\in \mN$ must therefore be identically zero since
$D$ is a domain.
\end{proof}

\noindent For maximal ideals of $H(D)$, one can say more, and we have
the following results given in Theorem~\ref{thm_MR}. But first we
proof a helpful lemma, which will be used in the proof of
Theorem~\ref{thm_MR} (and also in the subsequent result).
 
\begin{lemma}
\label{little_lemma}
Let $D$ be a domain, and $M$ be a maximal ideal in the ring $H(D)$.  If $f \in M$, 
and $h\in H(D)$ is such that $Z_D(f) \ss Z_D(h)$ $($disregarding
multiplicities$)$, then $h \in M$ too.
\end{lemma}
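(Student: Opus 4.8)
The plan is to exploit the structure of maximal ideals in $H(D)$, which (by a well-known theorem going back to Henriksen) come in two flavours: \emph{fixed} maximal ideals $M_\zeta=\{f\in H(D):f(\zeta)=0\}$ for some $\zeta\in D$, and \emph{free} maximal ideals, i.e.\ those with $Z_D(M)=\emptyset$. I would treat these two cases separately.

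In the fixed case $M=M_\zeta$, the hypothesis $Z_D(f)\subseteq Z_D(h)$ gives in particular $\zeta\in Z_D(f)\subseteq Z_D(h)$, so $h(\zeta)=0$ and hence $h\in M_\zeta=M$ immediately. So the content of the lemma is really in the free case. Suppose $M$ is free and $f\in M$. Since $M$ is proper we have $f\not\equiv 0$, so $Z_D(f)$ is a discrete set $\{\zeta_n\}$ in $D$ with no accumulation point in $D$. By the Weierstrass factorization theorem there is a function $f_0\in H(D)$ whose zeros are exactly the points of $Z_D(f)$, each simple; then $f=f_0\,u_0$ for some $u_0\in H(D)$ (namely $u_0=f/f_0$, which is holomorphic since the zeros of $f$ are all absorbed by $f_0$). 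Now $f_0\in M$: indeed $f_0$ vanishes at every point where $f$ does, and — since $M$ is free and hence not contained in any $M_\zeta$ — one shows $M$ contains a function vanishing precisely on $Z_D(f)$ with the same (or smaller) multiplicities; the standard argument is that $f_0=f\cdot(1/u_0)$ would be in $M$ if $1/u_0\in H(D)$, but $u_0$ need not be invertible, so instead one notes that $f_0$ and $f$ generate ideals with the same zero-set data and uses that $M$, being maximal, already \enquote{sees} the full zero information of $f$. Concretely: it suffices to prove that if $g\in H(D)$ has the same zeros as $f$ with multiplicities $\le$ those of $f$, then $g\in M$; and this follows because $g$ divides a sufficiently high power $f^N$ of $f$ in $H(D)$ (comparing orders at each zero), so $g^{?}\mid f^N$...

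Let me streamline this. The cleanest route: let $f\in M$, $f\not\equiv 0$. Write $Z_D(f)=\{\zeta_n\}$ with multiplicities $m_n=\ord(f,\zeta_n)$. Given $h$ with $Z_D(f)\subseteq Z_D(h)$, pick any $h$; I want $h\in M$. Factor $h=h_1 h_2$ where $h_1$ is a Weierstrass product with simple zeros exactly at the $\zeta_n$ that are common zeros, i.e.\ at all of $Z_D(f)$ (simple zeros), and $h_2\in H(D)$. Then $h_1$ divides $f$ in $H(D)$ (since $f$ vanishes to order $m_n\ge 1$ at each $\zeta_n$), say $f=h_1 k$ with $k\in H(D)$. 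Since $M$ is prime (being maximal) and $f=h_1 k\in M$, either $h_1\in M$ or $k\in M$. If $h_1\in M$, then $h=h_1 h_2\in M$ and we are done. If instead $k\in M$, then $Z_D(k)\subsetneq Z_D(f)$ strictly (we removed one simple zero from each point of $Z_D(f)$, lowering each order by one, and dropping those $\zeta_n$ with $m_n=1$), and we repeat the argument with $k$ in place of $f$. The issue is that this descent need not terminate if infinitely many $m_n$ are large.

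The correct and standard argument, which I expect to be the technical heart, avoids induction: \textbf{Main step.} I claim $h_1\in M$, where $h_1$ is the Weierstrass product with simple zeros exactly on $Z_D(f)$. Suppose not; then since $M$ is maximal, $h_1$ and $M$ generate all of $H(D)$, so there exist $a\in H(D)$ and $b\in M$ with $a h_1 + b = 1$. Now $f\in M$ and $h_1\mid f$, say $f=h_1 k$. Consider $f$ and $h_1$: at each $\zeta_n$, $\ord(h_1,\zeta_n)=1\le m_n=\ord(f,\zeta_n)$, but there is no single common divisor in $M$ forced... Actually the right fact to invoke is: in $H(D)$, the ideal $M$ being free means that for the given $f\in M$ with zero set $Z_D(f)$, the \emph{principal ideal generated by any Weierstrass product with the same zero set (ignoring multiplicity) is already contained in $M$}, because otherwise one could, via the corona-type / Wolff-type division, write such a product as $f\cdot\varphi$ with $\varphi\in H(D)$ — but $\varphi$ is holomorphic precisely because $f$'s multiplicities dominate. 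Wait — that gives $h_1 = f\cdot(k^{-1})$... no. \emph{I will therefore invoke the known description: every ideal of $H(D)$ is determined up to its zero divisor structure, and for a free maximal ideal $M$, $f\in M$ iff the \enquote{reduced} zero function $f_0$ of $f$ lies in $M$, and $h\in M$ whenever $Z_D(f_0)\subseteq Z_D(h)$ for some $f\in M$; this is precisely the statement that $M$ depends only on zero sets, not multiplicities.} The hard part will be justifying that $h_1\in M$; I expect to do it by the following clean observation: $h_1$ divides $f$, so $f=h_1 k$; if $h_1\notin M$ then (by primality) $k\in M$; but also $f$ divides $h_1^{m}$ for... no, $f$ does not divide any power of $h_1$ if some $m_n$ exceeds all others.

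I will conclude with the argument that actually works and is surely the author's: \textbf{Proof.} If $M$ is fixed, $M=\{f:f(\zeta)=0\}$ and the claim is trivial as above. If $M$ is free, let $f\in M\setminus\{0\}$. Since $M$ is free, there is $f_1\in M$ with $Z_D(f_1)=\emptyset$... no, that contradicts $f_1\in M$ being a non-unit. \emph{Hmm.} The truly standard fact is \textbf{Henriksen's theorem}: a free maximal ideal $M$ of $H(D)$ satisfies: $g\in M$ if and only if $g$ vanishes on a set of the form $Z_D(f)\cap(\text{something in the filter defining }M)$ — i.e.\ $M$ is built from a filter of zero sets, and crucially membership ignores multiplicities. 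Granting Henriksen's description, the lemma is immediate: $f\in M$ means $Z_D(f)$ belongs to the relevant ultrafilter-like family, and $Z_D(f)\subseteq Z_D(h)$ then forces $Z_D(h)$ to also be in that family, so $h\in M$. \textbf{So the plan is:} (1) recall the two-case structure of maximal ideals in $H(D)$; (2) dispatch the fixed case by evaluation; (3) in the free case, either cite Henriksen's characterization of free maximal ideals as being determined by filters of zero sets (multiplicity-blind), from which $h\in M$ is immediate, or — for a self-contained proof — use the factorization $f=h_1 k$ with $h_1$ a simple-zero Weierstrass product on $Z_D(f)$, show $h_1\mid h$, and prove $h_1\in M$ by primality together with the fact that $k$ (having strictly fewer zeros) cannot lie in a free maximal ideal unless... ; the main obstacle is exactly establishing $h_1\in M$ without circularity, and I anticipate resolving it by the filter characterization rather than by hand.
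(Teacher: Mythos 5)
There is a genuine gap: the step you yourself identify as ``the main obstacle'' --- showing that the simple-zero Weierstrass product $h_1$ on $Z_D(f)$ (equivalently, that $h$ itself) lies in $M$ --- is never established. Your direct attempts all break down, as you note: the descent $f=h_1k$, $k\in M$, need not terminate when the multiplicities of $f$ are unbounded, and $f$ need not divide any power of $h_1$. Your fallback is to invoke a ``filter characterization'' of free maximal ideals to the effect that membership in $M$ depends only on zero sets and is upward-closed under containment of zero sets; but that statement \emph{is} the lemma (Henriksen's structure results are stated for entire functions, you do not state the needed theorem precisely, and proving it for a general domain $D$ would require exactly the kind of argument the lemma is asking for). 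So the proposal either assumes the conclusion or defers its entire content to an unproved citation. The fixed-versus-free case split is also unnecessary.

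The argument the paper uses is short and purely algebraic, and you may find it instructive to compare: suppose $h\notin M$. By maximality, $\langle h\rangle+M=H(D)$, so $1=gh+m$ with $g\in H(D)$, $m\in M$; hence $Z_D(h)\inter Z_D(m)=\emp$, and since $Z_D(f)\ss Z_D(h)$ also $Z_D(f)\inter Z_D(m)=\emp$. By the Nullstellensatz for $H(D)$ (two functions without a common zero generate the unit ideal --- a consequence of the fact that finitely generated ideals in $H(D)$ are principal, generated by the gcd, i.e.\ Wedderburn's theorem), there exist $u,v\in H(D)$ with $1=uf+vm\in M$, contradicting the properness of $M$. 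No Weierstrass products, no multiplicity bookkeeping, and no case distinction are needed; the only analytic input is the B\'ezout/Nullstellensatz property of $H(D)$.
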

\begin{proof} 
Suppose that $h$ is not in $M$. Then the ideal $\langle h\rangle +M$, 
which strictly contains $M$, must be $H(D)$, thanks to the maximality
of $M$. Thus there exists $m\in M$ and $g\in H(D)$ such that $1=gh+m$.
Hence we have that $Z_D(h) \cap Z_D(m)=\emptyset$. Thus $Z_D(f) \cap
Z_D(m)=\emptyset$.  By the Nullstellensatz for $H(D)$, it follows that
there exist $u,v\in H(D)$ such that $1 =uf +vm\in M$.
This is absurd,  because $m,f\in M$ and  $M$ is proper. 
\end{proof}
 
\begin{theorem}
\label{thm_MR}
Let $D$ be a domain in $\mC$, and $M$ be a free maximal
ideal of $H(D)$. Then
\begin{enumerate}
\item $\{0\}\cup \Big\{f\in M: f\neq 0 \textrm{ and
  }\displaystyle \lim_{\zeta\in Z_D(f)} \ord(f,\zeta)=\infty\Big\} \subset
  ki(M).$
\item $ki(M)\subseteq\{0\}\cup \Big\{f\in M:
  f\neq 0 \textrm{ and }\displaystyle \sup_{\zeta\in Z_D(f)}
  \ord(f,\zeta)=\infty\Big\}$.
\end{enumerate}
Hence $ki (M)=\{0\}$ if and only if there exists a $\zeta
\in D$ such that $M=\langle z-\zeta\rangle$.
\end{theorem}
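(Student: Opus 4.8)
The plan is to fix a free maximal ideal $M$ of $H(D)$, establish (1) and (2) separately, and then read off the final equivalence. I would begin with a preliminary observation used throughout: \emph{every nonzero $f\in M$ has an infinite zero set $Z_D(f)$.} Indeed, if $Z_D(f)$ were finite (or empty), then, $Z_D(f)$ being discrete in $D$, one can write $f=p\cdot u$ with $p(z)=\prod_{\zeta\in Z_D(f)}(z-\zeta)^{\ord(f,\zeta)}$ a polynomial whose roots lie in $D$ and $u:=f/p$ a nowhere‑vanishing, hence invertible, element of $H(D)$; then $p\in M$, and since $M$ is prime some factor $z-\zeta_0$ ($\zeta_0\in D$) lies in $M$, forcing $M=\langle z-\zeta_0\rangle$ (a maximal ideal containing it) — a fixed ideal, contradicting freeness. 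The genuinely delicate part of the theorem is (1); (2) and the equivalence are short afterwards.

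For (1): given $0\neq f\in M$ with $\ord(f,\zeta)\to\infty$ along $Z_D(f)$ and given $n\in\mathbb N$, the goal is $f\in M^n$. I would split off the finitely many zeros of $f$ of order $<n$ by the polynomial $P(z)=\prod_{\ord(f,\zeta)<n}(z-\zeta)^{\ord(f,\zeta)}$ and set $g:=f/P\in H(D)$, so that every zero of $g$ now has order $\geq n$. Crucially, $P\notin M$ (otherwise some $z-\zeta$, $\zeta\in D$, lies in $M$ and $M=\langle z-\zeta\rangle$ is fixed), so primality of $M$ together with $f=Pg\in M$ yields $g\in M$. Then I would distribute multiplicities: for each $\zeta\in Z_D(g)$ write $\ord(g,\zeta)=m_{1,\zeta}+\cdots+m_{n,\zeta}$ with all $m_{j,\zeta}\geq1$ (possible since $\ord(g,\zeta)\geq n$), use the Weierstrass factorization theorem to build $\tilde g_1,\dots,\tilde g_n\in H(D)$ with $Z_D(\tilde g_j)=Z_D(g)$ and $\ord(\tilde g_j,\zeta)=m_{j,\zeta}$, and absorb the unit $g/(\tilde g_1\cdots\tilde g_n)$ into $\tilde g_1$ to get $g_1,\dots,g_n$ with $g=g_1\cdots g_n$ and $Z_D(g_j)=Z_D(g)$ for all $j$. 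Since each $g_j$ has the same zero set as the element $g\in M$, Lemma~\ref{little_lemma} gives $g_j\in M$, hence $g\in M^n$ and $f=Pg\in M^n$; as $n$ was arbitrary, $f\in ki(M)$. The main obstacle is exactly this: one cannot split $f$ itself directly into $n$ factors of $M$ because of its low‑order zeros, and the device of removing them by $P$ and re‑entering $M$ via primality and freeness is the heart of the argument.

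For (2): I would isolate the claim \emph{``if $0\neq g\in M^n$ then $g$ has a zero of order $\geq n$ in $D$.''} Granting it, any $0\neq f\in ki(M)=\bigcap_{n\geq1}M^n$ has, for every $n$, a zero of order $\geq n$, so $\sup_{\zeta\in Z_D(f)}\ord(f,\zeta)=\infty$, and $f\in M$ since $ki(M)\subseteq M$ — which is precisely (2). To prove the claim, write $g=\sum_{i=1}^m a_{1,i}\cdots a_{n,i}$ with all $a_{k,i}\in M$. If the finitely many functions $a_{k,i}$ had no common zero in $D$, then the Nullstellensatz for $H(D)$ (the same one used in the proof of Lemma~\ref{little_lemma}) would give $1\in\langle a_{k,i}:i,k\rangle\subseteq M$, contradicting $M\subsetneq H(D)$; hence there is a common zero $\zeta_0$, at which each product $a_{1,i}\cdots a_{n,i}$, and therefore the sum $g$, vanishes to order at least $n$.

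For the equivalence: if $M$ is fixed, choosing $\zeta\in Z_D(M)$ gives $M\subseteq\langle z-\zeta\rangle\subsetneq H(D)$, so $M=\langle z-\zeta\rangle$ by maximality, and $ki(M)=\bigcap_n\langle(z-\zeta)^n\rangle=\{0\}$ (a nonzero function divisible by every power of $z-\zeta$ vanishes identically on the connected set $D$), as already noted for fixed ideals. Conversely, for free $M$ I would exhibit a nonzero element of $ki(M)$: pick any $0\neq g\in M$, enumerate its infinite zero set $Z_D(g)=\{w_1,w_2,w_3,\dots\}$, and use the Weierstrass factorization theorem to build $h\in H(D)$ with $Z_D(h)=\{w_j:j\geq1\}$ and $\ord(h,w_j)=j$. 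Then $Z_D(h)=Z_D(g)$, so $h\in M$ by Lemma~\ref{little_lemma}, while $\ord(h,\cdot)$ takes each positive integer value exactly once on $Z_D(h)$, so $\ord(h,\zeta)\to\infty$ along $Z_D(h)$; by part (1), $0\neq h\in ki(M)$, whence $ki(M)\neq\{0\}$. Thus $ki(M)=\{0\}$ holds exactly for the fixed maximal ideals, i.e.\ exactly when $M=\langle z-\zeta\rangle$ for some $\zeta\in D$.
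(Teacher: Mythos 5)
Your proof is correct and follows essentially the same route as the paper's: part (1) by splitting off a polynomial carrying the finitely many low-order zeros, using primeness of $M$ (no polynomial, hence no linear factor, can lie in a free maximal ideal) and Lemma~\ref{little_lemma} to put the equal-zero-set factors back into $M$; part (2) by the Nullstellensatz common-zero argument applied to the finitely many factors in a representation of $f\in M^n$. The only difference is that you spell out what the paper leaves implicit in the final ``Hence'': the observation that nonzero elements of a free maximal ideal have infinite zero sets, and the Weierstrass construction of a function $h\in M$ with $\ord(h,\zeta)\to\infty$, which by (1) gives $0\neq h\in ki(M)$.
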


\noindent Here, by assumption that
$$
\displaystyle \lim_{\zeta\in Z_D(f)} \ord(f,\zeta)=\infty,
$$
we mean that given any $n>0$, there exists a finite set $K\subset
Z_D(f)$ such that $\ord(f,\zeta)>n$ for all $\zeta \in Z_D(f)\setminus
K$.

\begin{proof} 
(1)  First we observe that $M$ contains no polynomial. 
(Otherwise, if a polynomial $p\in M$, it follows, by using the fact
that $M$ is in particular prime, that $M$ contains a linear factor
$z-w$ of $p$.  But then we have that $M \ss M_w:=\{f \in
H(D):f(w)=0\}$.  Since the later ideal is proper, and $M$ is maximal,
$M =M_w$ would be a fixed ideal.)
  
Let $f\in M\setminus \{0\}$ with
$$
\lim_{\zeta\in Z_D(f)} \ord(f,\zeta)=\infty.
$$ 
Suppose that $n\in \mN$. Then it is possible to factorize $f$ as
$f=f_n p$, where $p$ is a polynomial and the orders of all zeros of
$f_n$ are at least $n$.  By the primeness of (the maximal!) ideal $M$,
and the fact that $M$ contains no polynomials, it follows that $f_n
\in M$ too. But now we can write $f_n=g_1\cdots g_n$, where each of
the functions $g_k$ have the same zero set (disregarding
multiplicities).  Again the primeness of $M$, and
Lemma~\ref{little_lemma}, allow us to conclude that all the $g_k$
belong to $M$. Hence $f\in M^n$. As the choice of $n\in \mN$ was
arbitrary, we obtain that $f\in ki(M)$.
   
\medskip 

\noindent (2) Assume that $f\in ki(M)$ and let  $n\in \mN$. Then $f$ can be
decomposed into a finite sum of the form
$$
f=\sum_{k=1}^N  f_{1,k}\cdots f_{n,k},
$$
with each $f_{j,k}\in M$. All  these functions $f_{j,k}\in M$ must
have a common zero, since otherwise (by the Nullstellensatz for
$H(D)$), we can generate $1$ in $M$, a contradiction to the fact that $M$ is proper.
 But then the order of this common zero of $f$ must be at least
$n$. As the choice of $n\in \mN$ was arbitrary, it follows that
$\displaystyle \sup_{\zeta\in Z_D(f)} \ord(f,\zeta)=\infty.$
\end{proof}

\noindent We remark that a somewhat different characterization of
$ki(M)$ was provided in \cite[Theorem~3,p.714]{Hen} for the algebra
$H(\mC)$ of entire functions. We extend Henriksen's result to domains,
and then compare our result above with his result below. 
Since Henriksen's proof was, in our viewpoint,  very condensed, we provide all details in the more general case. Since prime ideals
appear very naturally in the description of $ki(M)$, we include a nice property shared by
 this class of ideals. Also that result is known; \cite[Theorem~1]{Hen}.

\medskip 
 
\noindent Given $f\in H(D)$, let 
$$
\mathbf{o}(f):=\sup_{\zeta \in Z_D(f)} \ord(f,\zeta).
$$
If $q\equiv 0$, we set $\mathbf{o}(q):=\infty $.

\begin{lemma}\label{prime-o}
Let $P$ be  a prime ideal in $H(D)$, where $D$ is a  domain in $\C$.
Then $P$ is non-maximal if and only if $\mathbf o(f)=\infty$ for every $f\in P$.
\end{lemma}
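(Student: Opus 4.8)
The plan is to prove the two implications separately, exploiting what Theorem~\ref{thm_MR} already tells us about the structure of maximal ideals of $H(D)$. For the easy direction, suppose $P$ is a maximal ideal. If $P$ is free, then by part (1) of Theorem~\ref{thm_MR} (choosing any nonzero $f\in P$ with only simple zeros, e.g. a Weierstrass product on a sequence in $D$ without accumulation points in $D$, which lies in $P$ by Lemma~\ref{little_lemma} applied to any nonzero element of $P$) we obtain an $f\in P$ with $\mathbf{o}(f)<\infty$; if $P$ is fixed, say $P=M_\zeta=\langle z-\zeta\rangle$ for some $\zeta\in D$, then $z-\zeta\in P$ has $\mathbf{o}(z-\zeta)=1<\infty$. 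In either case $P$ contains an element $f$ with $\mathbf{o}(f)<\infty$, which is the contrapositive of ``$\mathbf{o}(f)=\infty$ for every $f\in P$''. This disposes of the ``if'' direction.

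For the converse, assume $P$ is a non-maximal prime ideal and fix any $f\in P$; I must show $\mathbf{o}(f)=\infty$. If $f\equiv 0$ this holds by convention, so assume $f\neq 0$. The first observation is that $P$ contains no polynomial: if a polynomial $p\in P$, primeness forces a linear factor $z-w\in P$, hence $P\subseteq M_w$; but $M_w$ is maximal, so $P=M_w$, contradicting non-maximality. Now suppose for contradiction that $N:=\mathbf{o}(f)<\infty$, i.e. every zero of $f$ in $D$ has order at most $N$. The idea is to factor off the ``high-multiplicity part''. Write $f=g\cdot h$ where $g$ collects the zeros of $f$ with their full multiplicities in a way that makes $g$ a polynomial-free factor and $h$ a polynomial — but more simply: since $P$ contains no nonconstant polynomial while $P$ is prime, and $f$ is a product of its local factors, I claim there is a nonzero $f_0\in P$ whose zero set is a subset of $Z_D(f)$ and which has \emph{unbounded} multiplicities, contradicting nothing yet; the sharper move is to use that $P\subsetneq M$ for some maximal ideal $M$ (every proper ideal sits in a maximal one). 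Pick $m\in M\setminus P$. Then $fm\in P$ but we want to extract a contradiction with $N$. The cleanest route: since $f\in P$ and $\mathbf{o}(f)=N<\infty$, factor $f=u\,v$ where $v$ is a Weierstrass product with \emph{simple} zeros exactly on $Z_D(f)$ and $u$ carries the remaining multiplicities; then $u$ and $v$ have the same zero set, so by Lemma~\ref{little_lemma} (applied with the maximal ideal containing $P$... but $P$ need not be maximal) one reduces to showing $v\in P$. Here primeness of $P$ gives $u\in P$ or $v\in P$; iterating the factorization of $u$, which has bounded multiplicities strictly dropping, after finitely many steps one lands a factor $w\in P$ with \emph{simple} zeros. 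But then take any maximal ideal $M\supseteq P$; by Theorem~\ref{thm_MR}(2), $ki(M)$ consists only of functions with $\mathbf{o}=\infty$, so $w\notin ki(M)$, meaning $w\notin M^{k}$ for some $k$. That alone is not a contradiction. The genuine argument is: $P$ non-maximal prime means $R/P$ is an integral domain that is not a field, so some nonzero $[a]\in R/P$ is not invertible, i.e. $\langle a\rangle + P\subsetneq R$; choose $a$ with $Z_D(a)\cap Z_D(w)=\emptyset$ (possible since $w$ has only countably many simple zeros and $H(D)$ is rich) — then by the Nullstellensatz for $H(D)$ there exist $s,t$ with $1=sw+ta$, forcing $1\in \langle w\rangle+P\subseteq\langle a\rangle+P$ only if... this needs care.

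Rather than the above tangle, the clean proof I would write is: assume $P$ non-maximal prime, fix $f\in P\setminus\{0\}$, and suppose $\mathbf{o}(f)=N<\infty$. Factor $f=f_1\cdots f_N$ where each $f_j$ has zero set contained in $Z_D(f)$, the $f_j$ all share the same zero set $Z_D(f)$ \emph{disregarding multiplicity} (possible since each zero of $f$ has order $\le N$: distribute the $\le N$ copies of each zero one-per-factor, padding with simple zeros where needed so all factors have zero set exactly $Z_D(f)$). By primeness of $P$, some $f_{j_0}\in P$. Now $f_{j_0}$ has only simple zeros, on the set $Z_D(f)$. Since $P$ is contained in a maximal ideal $M$, and $f_{j_0}\in M$, Lemma~\ref{little_lemma} shows every $h\in H(D)$ with $Z_D(h)\subseteq Z_D(f_{j_0})$ lies in $M$; in particular every function vanishing only on a subset of $Z_D(f)$ is in $M$, and by the Nullstellensatz in $H(D)$ this forces $M$ to be the fixed ideal $M_\zeta$ for some $\zeta\in Z_D(f_{j_0})$ — because if $M$ were free, two coprime functions supported on $Z_D(f)$ would generate $1\in M$. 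So $M=M_\zeta=\langle z-\zeta\rangle$, whence $z-\zeta\in M$. But is $z-\zeta\in P$? Not necessarily — yet $f\in P\subseteq M_\zeta$ means $\zeta\in Z_D(f)$, and $f=(z-\zeta)^{k}g$ with $1\le k\le N$ and $g(\zeta)\neq 0$; since $g\notin M_\zeta\supseteq P$, primeness of $P$ gives $(z-\zeta)^k\in P$, hence $z-\zeta\in P$, hence $P\supseteq\langle z-\zeta\rangle=M_\zeta$, forcing $P=M_\zeta$ maximal — contradiction. Therefore no such finite $N$ exists, i.e. $\mathbf{o}(f)=\infty$. \textbf{The main obstacle} is the middle step: establishing that the maximal ideal $M\supseteq P$ must be \emph{fixed} once $P$ contains a function with only simple zeros on a discrete set — this is where the Nullstellensatz for $H(D)$ and Lemma~\ref{little_lemma} do the real work, and one must be careful that "simple zeros" plus primeness genuinely propagates a linear factor into $P$ itself, not merely into the ambient maximal ideal.
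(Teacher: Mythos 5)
Your easy direction is essentially the paper's: a maximal ideal contains a function with only simple zeros by Lemma~\ref{little_lemma} (the appeal to Theorem~\ref{thm_MR}(1) is superfluous). The hard direction, however, has a genuine gap. First, the factorization ``$f=f_1\cdots f_N$ with every factor having zero set exactly $Z_D(f)$, padding with simple zeros where needed'' is incoherent: padding a factor with extra zeros changes the product, so it no longer equals $f$; the correct device (and the one the paper uses) is to let $f_j$ have simple zeros precisely at the points where $\ord(f,\cdot)\geq j$, so that each $Z_D(f_j)\ss Z_D(f)$ but the zero sets differ in general. That slip is repairable. The fatal step is the next one: you invoke Lemma~\ref{little_lemma} to conclude that every $h$ with $Z_D(h)\ss Z_D(f_{j_0})$ lies in $M$. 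The lemma says the opposite: if $f_{j_0}\in M$ and $Z_D(f_{j_0})\ss Z_D(h)$, then $h\in M$; membership in $M$ propagates to functions with \emph{larger} zero sets, not smaller ones. Consequently your intermediate claim that the maximal ideal $M\supseteq P$ must be fixed is invalid as an inference: free maximal ideals of $H(D)$ contain plenty of functions with only simple zeros (Lemma~\ref{little_lemma} produces them), yet they contain no function with finitely many zeros, so a function vanishing at a single point of $Z_D(f_{j_0})$ is typically not in $M$. With that claim gone, the derivation of $z-\zeta\in P$ and the final contradiction collapse; the numerous abandoned attempts earlier in your write-up do not supply a substitute.

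At exactly this juncture the paper argues differently: having a simple-zero factor $f_1\in P$, it shows directly that $P$ absorbs all of an arbitrary maximal ideal $M\supseteq P$, fixed or free. For $g\in M$ put $d=\gcd(f_1,g)$; by Wedderburn's theorem $d\in\langle f_1,g\rangle\ss M$, and writing $f_1=dh$, the simplicity of the zeros of $f_1$ gives $Z_D(d)\inter Z_D(h)=\emp$, so $h\notin M$ (otherwise $1\in\langle d,h\rangle\ss M$ by the Nullstellensatz); primeness then forces $d\in P$, hence $g=d\,(g/d)\in P$. Thus $P=M$ is maximal, and there is no need to decide whether $M$ is fixed. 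Some such coprimality/gcd mechanism is what your argument is missing.
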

\begin{proof}
The only if direction is a direct consequence of Lemma \ref{little_lemma}.
So suppose that $P$ is prime and  contains an element $f$ with $N:=\mathbf o(f)<\infty$.
Let $M$ be  a maximal ideal with $P\ss M$. We show that $P=M$.
Write $f=f_1\dots f_N$, where each zero of any $f_j$ is simple. Since $P$ is prime,
at least one of the $N$ factors belongs to $P$. Say it is $f_1$. Fix $g\in M $ and let
$d=\gcd(f_1,g)$. Then, by Wedderburn's Theorem \cite[p.119]{Rem},  $d\in \langle f_1, g\rangle\ss M$.
Now $f_1=dh $ for some $h\in H(D)$.  Since the zeros of $f_1$ are simple,
$Z(d)\inter Z(h)=\emp$. Hence, $h$ cannot belong to $P\ss M$, because otherwise
$H(D)=\langle d,h\rangle \ss M$, a contradiction. Thus $d\in P$ and so  $g\in P$. Consequently,
$M=P$.
\end{proof}

\begin{proposition}[Henriksen] 
\label{prop_Henriksen}
Let $D\ss \mC$ be a domain, and let $M$ be a free, maximal ideal
of $H(D)$.  Then
$$
ki(M) 
= 
\left\{\!f\in
M\bigg|
 \begin{array}{ll} \textrm{\em whenever } d\in H(D)\setminus M 
\textrm{\em is a divisor of}\;  f, \\ \textrm{\em say}\;  f=q\cdot d,
\textrm{\em we have }
\;\displaystyle \mathbf{o}(q)=\infty
\end{array} \!\right\}.
$$
Moreover
$ki(M)$ is the largest nonmaximal prime ideal contained in $M$.
\end{proposition}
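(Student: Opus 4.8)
Write $E$ for the set on the right-hand side, and for $0\ne h\in H(D)$ and $N\in\N$ put $A_h^{(N)}:=\{a\in Z_D(h):\ord(h,a)\ge N\}$. The plan is to reduce everything to the combinatorics of the family $\calZ_M:=\{Z_D(h):h\in M\setminus\{0\}\}$ of zero sets of members of $M$. The first task is to record its closure properties: (i) for $0\ne h\in H(D)$ one has $h\in M\iff Z_D(h)\in\calZ_M$, and $\calZ_M$ is closed under passing to larger closed discrete subsets of $D$ (both by Lemma~\ref{little_lemma}); (ii) $\calZ_M$ is closed under finite intersections, since for $g_1,g_2\in M\setminus\{0\}$ one has $\gcd(g_1,g_2)\in\langle g_1,g_2\rangle\subseteq M$ by Wedderburn's theorem (as in the proof of Lemma~\ref{prime-o}), while $Z_D(\gcd(g_1,g_2))=Z_D(g_1)\cap Z_D(g_2)$; (iii) $\emptyset\notin\calZ_M$, as $M$ is proper; (iv) the ultrafilter dichotomy: for $h\in M\setminus\{0\}$ and any partition $Z_D(h)=W\cup W'$ into disjoint closed discrete sets, exactly one of $W,W'$ lies in $\calZ_M$ --- at least one because a Weierstrass product with zero set $Z_D(h)$ lies in $M$ by (i), $M$ is prime, and a Weierstrass product with zero set $W$ (resp.\ $W'$) lies in $M$ iff $W\in\calZ_M$ (resp.\ $W'\in\calZ_M$); at most one by (ii) and (iii). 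Since $M$ is free, $M\ne\{0\}$, so by (i) every nonzero $h\in M$ has $Z_D(h)\ne\emptyset$.

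The main step is the identity
$$
ki(M)\;=\;\{0\}\cup\{\,0\ne f\in H(D):A_f^{(N)}\in\calZ_M\text{ for every }N\in\N\,\}\;=\;E .
$$
For $ki(M)\subseteq\{\cdots\}$: if $0\ne f\in M^n$, write $f=\sum_k\prod_{j=1}^n f_{j,k}$ with every $f_{j,k}\in M\setminus\{0\}$; then $Z:=\bigcap_{j,k}Z_D(f_{j,k})\in\calZ_M$ by (ii), and each $a\in Z$ is a zero of $f$ of order $\ge n$, so $Z\subseteq A_f^{(n)}$ and $A_f^{(n)}\in\calZ_M$ by (i); letting $n$ vary gives the inclusion. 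For $\{\cdots\}\subseteq ki(M)$: given such an $f$ and $n\in\N$, we have $Z_D(f)=A_f^{(1)}\in\calZ_M$ and $A:=A_f^{(n)}\in\calZ_M$; write $\operatorname{div}(f)=D_1+\cdots+D_n$ with effective divisors $D_j$, where at each $a\in A$ (so $\ord(f,a)\ge n$) the multiplicity is split into $n$ strictly positive parts and the remaining multiplicity, supported on $Z_D(f)\setminus A$, is put entirely into $D_1$; if $g_j\in H(D)$ is a Weierstrass product with $\operatorname{div}(g_j)=D_j$, then $Z_D(g_1)=Z_D(f)$ while $Z_D(g_j)=A$ for $j\ge 2$, so all $g_j\in M$ by (i), and $f$ equals $g_1\cdots g_n$ up to a unit, hence $f\in M^n$; as $n$ was arbitrary, $f\in ki(M)$. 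Finally $\{\cdots\}=E$: applying the definition of $E$ with the divisor $d=1$ shows $E\subseteq\{0\}\cup\{\,h:\mathbf{o}(h)=\infty\,\}$, so every $0\ne f\in E$ has all $A_f^{(N)}\ne\emptyset$; if moreover $A_f^{(N)}\notin\calZ_M$ for some $N$, the Weierstrass product $d$ with $Z_D(d)=A_f^{(N)}$ is a divisor of $f$ with $d\notin M$ by (i), and its cofactor $q=f/d$ vanishes only at zeros of $f$ of order $<N$, so $\mathbf{o}(q)\le N-1<\infty$ and $f\notin E$; conversely, if all $A_f^{(N)}\in\calZ_M$ and $d\notin M$ divides $f$, then $Z_D(d)\notin\calZ_M$, so $Z_D(f)\setminus Z_D(d)\in\calZ_M$ by (iv), and for each $N$ the set $(Z_D(f)\setminus Z_D(d))\cap A_f^{(N)}$ is a nonempty member of $\calZ_M$ whose points are zeros of $q=f/d$ of order $\ge N$, whence $\mathbf{o}(q)=\infty$ and $f\in E$.

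It remains to prove the ``moreover''. Clearly $ki(M)\subseteq M$. It is not maximal: the Weierstrass product $\widehat f$ with simple zeros on the (nonempty) set $Z_D(f)$ of any $0\ne f\in M$ lies in $M$ by Lemma~\ref{little_lemma} but has $\mathbf{o}(\widehat f)=1<\infty$, so $\widehat f\in M\setminus E=M\setminus ki(M)$ and $ki(M)\subsetneq M\subsetneq H(D)$. It is prime: suppose $fg\in ki(M)$ with $f\notin ki(M)$; we may take $f,g\ne0$, so $fg\ne0$ and $Z_D(fg)\in\calZ_M$. Pick $N_0$ with $A_f^{(N_0)}\notin\calZ_M$; then $A_f^{(N)}\notin\calZ_M$ for all $N\ge N_0$ by (i). Fix $N$. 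Since $\ord(fg,a)=\ord(f,a)+\ord(g,a)$ one checks $A_{fg}^{(N+N_0)}\subseteq A_g^{(N)}\cup A_f^{(N_0+1)}$, all three sets being contained in $Z_D(fg)$; as $A_{fg}^{(N+N_0)}\in\calZ_M$ while $A_f^{(N_0+1)}\notin\calZ_M$, the dichotomy (iv) gives $Z_D(fg)\setminus A_f^{(N_0+1)}\in\calZ_M$, hence $A_{fg}^{(N+N_0)}\cap(Z_D(fg)\setminus A_f^{(N_0+1)})\in\calZ_M$ by (ii); this set lies in $A_g^{(N)}$, so $A_g^{(N)}\in\calZ_M$, and since $N$ was arbitrary, $g\in ki(M)$. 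Finally $ki(M)$ contains every non-maximal prime $P$ with $P\subseteq M$: by Lemma~\ref{prime-o} every element of $P$ has $\mathbf{o}=\infty$; given $0\ne f\in P$ and $N$, factor $f=f_{<N}\cdot f_{\ge N}$ into the part carrying the zeros of $f$ of order $<N$ and the part carrying those of order $\ge N$; since $\mathbf{o}(f_{<N})\le N-1<\infty$, Lemma~\ref{prime-o} gives $f_{<N}\notin P$, so primeness of $P$ forces $f_{\ge N}\in P\subseteq M$, whence $A_f^{(N)}=Z_D(f_{\ge N})\in\calZ_M$; as $N$ varies, $f\in ki(M)$. Hence $ki(M)$ is the largest non-maximal prime ideal contained in $M$.

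I expect the delicate points to be property (iv) and the two Weierstrass constructions in the middle paragraph: the synthesis showing that the single condition $A_f^{(n)}\in\calZ_M$ already lets one factor $f$ into $n$ members of $M$ --- so that this much weaker hypothesis does the work of the ``$\lim_{\zeta\in Z_D(f)}\ord(f,\zeta)=\infty$'' of Theorem~\ref{thm_MR}(1) --- and the analysis showing that one well-chosen divisor not in $M$ pins down $A_f^{(N)}$. Keeping the multiplicity bookkeeping consistent, and checking at each step that the zero set of every function produced really lies in $\calZ_M$, is where the effort goes; the purely ring-theoretic inputs (Lemma~\ref{little_lemma}, Wedderburn's theorem, primeness of $M$, Lemma~\ref{prime-o}) enter only as quoted.
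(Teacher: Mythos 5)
Your proof is correct, but it takes a genuinely different route from the paper's. The paper starts from the fact that finitely generated ideals of $H(D)$ are principal, so that $f\in ki(M)$ iff for every $n$ one can write $f=h_nd_n^n$ with $d_n\in M$, and then proves the two inclusions with the right-hand set directly (the inclusion $ki(M)\subseteq K$ by a multiplicity-doubling trick with $k=2n$ and Lemma~\ref{little_lemma}, the reverse one by splitting $f=f_1f_2$ at the order threshold $n$); primeness of $ki(M)$ is then a case analysis on which of the two factors lies in $M$, and the maximality statements use Lemma~\ref{prime-o}, as you do. You instead route everything through the family $\calZ_M$ of zero sets of elements of $M$, show it behaves like an ultrafilter on the closed discrete subsets of $D$ (upward closure via Lemma~\ref{little_lemma}, intersections via Wedderburn's theorem, the dichotomy (iv) via primeness of $M$), and establish the intermediate characterization $ki(M)=\{f:\ A_f^{(N)}\in\calZ_M\ \text{for all}\ N\}$ by explicit divisor splitting; the identification with $E$, the primeness, and the ``largest nonmaximal prime'' claim then become set-theoretic manipulations with the sets $A_f^{(N)}$. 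The ingredients are the same (Lemma~\ref{little_lemma}, Wedderburn, Weierstrass products, Lemma~\ref{prime-o}), but your intermediate criterion is a genuinely different decomposition: it makes the primeness argument mechanical and displays how $ki(M)$ sits between the sets $A$ and $B$ of Theorem~\ref{thm_MR}, at the cost of more bookkeeping than the paper's shorter direct argument. One small wording point: in the step showing $E\subseteq\{f:\ A_f^{(N)}\in\calZ_M\}$ you must take $d$ to be the Weierstrass product whose divisor is the restriction of the divisor of $f$ to $A_f^{(N)}$ (full multiplicities), not merely a product with zero set $A_f^{(N)}$; otherwise the cofactor $q=f/d$ need not satisfy $\mathbf{o}(q)<N$. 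With that reading, which your bookkeeping clearly intends, every step checks out.
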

\begin{proof}
Since every finitely generated ideal of $H(D)$ is principal, 
$ki(M)$ is easily seen to be the set of all $f\in H(D)$ such that for
all $n \in \mN$, we have a factorization $f=h_nd_n^n$, with $h_n\in
H(D)$, $d_n\in M$.

\medskip 

Let 
$$
K:= 
\left\{\!f\in
M\bigg|
 \begin{array}{ll} \textrm{whenever } d\in H(D)\setminus M 
\textrm{ is a divisor of}\;  f, \\ \textrm{say}\;  f=q\cdot d,
\textrm{we have }
\;\displaystyle \mathbf{o}(q)=\infty
\end{array} \!\right\}.
$$
We first prove that $ki(M)\subseteq K$. Let $f \in ki(M)$ and suppose
that $d$ is a divisor of $f$ which does not belong to $M$. Say $f=d
q$. We need to show that $\mathbf{o}(q)=\infty$. If not, then let
$n:=\mathbf{o}(q)$ and $k=2n$.  Since $f\in ki(M)$, there exists an
$h_k\in H(D)$ and a $g_k\in M$ such that $dq=f=h_kg_k^k$. But then
every zero of $g_k$ must be a zero of $d$ (disregarding multiplicities)
(because each zero of $q$ appears at most $n$ times; on the other hand
every zero of $g_k$ appears at least $2n$ times).  Thus $Z_D(g_k)\ss
Z_D(d)$. Since $g_k\in M$, we have $d\in M$ by our
Lemma~\ref{little_lemma}, a contradiction.
   
Next we will show that $K\subseteq ki(M)$. Given $f\in K\subseteq M$, and
 $n\in \mN$, we may factor $f\in M$ as $f=f_1 f_2$, 
where $Z(f_2)=\{\zeta\in Z(f): \ord(f,\zeta) \geq n+1\}$
and $Z(f_1)=\{\zeta\in Z(f): \ord(f,\zeta) \leq n\}$.
If one of these sets is empty, we just let the associated function equal to be $1$.

 If $f_2\not\in M$, then we end up with $f_1\in M$.
But the definition of $K$ now implies that $\infty=\mathbf{o}(f_1)\leq
n$.  Thus in our factorization $M\owns f=f_1f_2$, we have $f_2\in
M$. Take a function $h_n$ such that we have $Z_D(h_n)=Z_D(f_2)$, and
such that $h_n$ has only simple zeros. Then by
Lemma~\ref{little_lemma}, $h_n\in M$ because $f_2\in M$. By
construction, $h_n^n$ divides $f_2$, and so $f_2=gh_n^n$. Summarizing,
$f=f_1f_2=f_1gh_n^n\in M^n$. Since $n\in \mN$ was arbitrary, it
follows that $f\in ki(M)$. This completes the proof that $ki(M)=K$.
   
\medskip 
  
Next we show that $ki(M)$ is prime. Assume that $f=f_1\cdot f_2\in ki(M)
\subseteq M$. Since $M$ is prime, we have one of three possible cases:
\begin{itemize}
 \item[$1^\circ$] $f_1\in M$ and $f_2\not\in M$, 
 \item[$2^\circ$] $f_1\not\in M$ and $f_2\in M$, 
 \item[$3^\circ$] $f_1\in M$ and $f_2\in M$. 
\end{itemize}

\noindent Case $1^\circ$: Let $d\in H(D)\setminus M$ be a divisor of $f_1$. Say $f_1=gd$. Then $f=g(df_2)$, where 
$df_2\notin M$. Since $ki(M)=K$, we deduce that $g\in ki(M)$. So $f_1\in ki(M)$.
Case $2^\circ$ works in the same way.

Now only the case left is when both $f_1,f_2$ are in $ M $.  Assuming
that neither $f_1$ nor $f_2$ belongs to $ki( M )$, we proceed as
follows.  In this case, there exist $d_i$ dividing $f_i$, with $d_i\in
H(D)\setminus M $, $q_i:=f_i/d_i\in M $, and $\mathbf{o}(q_i)<\infty$,
$i=1,2$. Since $ M $ is maximal, and in particular prime, $d_1 d_2\in
H(D)\setminus M $, and $ \mathbf{o}(q_1 q_2)\leq \mathbf{o}(q_1)+
\mathbf{o}(q_2) <\infty$. So $f_1f_2\not\in ki( M )$.

Consequently, $ki( M )$ is prime. Finally, we will show the following:

\medskip 

\noindent {\bf Claim:} $ki(M)$ is the largest nonmaximal prime ideal
contained in $M$.

\noindent First we show that $ki(M)$ is not maximal.
Take any nonzero $f\in M$, and let $h\in H(\mD)$ be such
that $Z(h)=Z(f)$, but $\ord(h,\zeta)=1$ for all $\zeta\in
Z_D(h)$. Then by Lemma~\ref{little_lemma}, $h\in M$ too. But with
$d:=1\in H(D)\setminus M$, and $q:=h$, we have $f=qd=h\in M$, but
$\mathbf{o}(q)=1<\infty$. Hence $f=h\not\in ki(M)$. Thus
$ki(M)\subsetneq M$, and so $ki(M)$ is nonmaximal.

Suppose now that $P$ is a 
prime ideal such that $ki(M)\subsetneq P\subseteq M$. 
Let $f\in P\setminus ki(M)$. Then there exists  $d\in
H(D)\setminus M$ and  $q\in M$ such that $f=q\cdot d$ and
$\mathbf{o}(q)<\infty$. But as $d\not\in M$ and hence not in $P$
either, we have $q\in P$.  By Lemma \ref{prime-o}, $P=M$.

This completes the proof of Proposition~\ref{prop_Henriksen}. 
\end{proof}

\begin{example} 
The aim of this example is to contrast the results 
from Theorem~\ref{thm_MR} and Proposition~\ref{prop_Henriksen}. If we
call
\begin{eqnarray}
\label{set_A}
  A&:=&\{0\}\cup \Big\{f\in  M : f\neq 0 \textrm{ and
  }\displaystyle \lim_{\zeta\in Z_D(f)} \ord(f,\zeta)=\infty\Big\} ,\\
\label{set_B}
  B&:=&\{0\}\cup \Big\{f\in  M : f\neq 0 \textrm{ and
  }\displaystyle \sup_{\zeta\in Z_D(f)} \ord(f,\zeta)=\infty\Big\} ,
\end{eqnarray}
then in Theorem~\ref{thm_MR} we have shown that 
$A\subseteq ki(M)\subseteq B$ whenever $M$ is a maximal free ideal in $H(D)$.
  We will show that
\begin{enumerate}
\item there exists an element $f\in B\setminus ki( M )$, showing that
  $B\neq ki( M )$;
\item there exists an element $g\in ki( M )\setminus A$, showing that
  $A\neq ki( M )$.
\end{enumerate} 

\medskip

 \noindent 
 To this end, first note that $A$ and $B$ are not ideals. 
 In fact, concerning $A$, just consider $f\in A$ and multiply $f$ 
by a function with simple zeros outside $Z(f)$.  Concerning $B$, 
let $f\in M$ have simple zeros (for the existence, see 
Lemma  \ref{little_lemma}). Now let $g_1,g_2$ be in $H(D)$ with 
$Z(g_1)\inter Z(g_2)=\emp$ and $\mathbf o(g_j)=\infty$.
 Choose $a_j\in H(D)$ so that $1=a_1g+a_2g_2$. Then $fa_jg_j\in B$, but 
 $$fa_1g_1+fa_2g_2=f\notin B.$$
Hence we conclude that $A\subset ki(M)\subset  B$, the inclusions being strict. 
\end{example}


\section{Sufficient conditions for $ki(I)=I$ in uniform algebras}
\label{section_3}

\noindent We recall the definition of a uniform algebra.

\begin{definition}[Uniform algebra] 
$R$ is called a {\em uniform algebra on $X$} if 
\begin{enumerate}
\item $X$ is a compact topological space,
\item $R\subseteq C(X;\mC)$, the algebra of complex-valued continuous
  functions on $X$, and $R$ separates the points of $X$, that is, for
  every $x,y\in X$ with $x\neq y$, there exists $f\in R$ such that
  $f(x)\neq f(y)$,
\item the constant function $1\in R$,
\item $R$ is a closed subalgebra of $C(X;\mC)$, where the latter is
  endowed with the usual supremum norm $\|\cdot\|_\infty $.
\end{enumerate}
\end{definition}

\noindent We also recall below the following two well-known results
from the theory of uniform algebras; see \cite[Lemma~1.6.3, p.72-73
and Theorem~1.6.5, p.74]{B}. Both of these results involve the notion
of an approximate identity, given below.

\begin{definition}[Approximate identity] 
Let $R$ be a commutative unital Banach algebra, and $ M $ 
be a maximal ideal of $R$. We say that $ M $ has an 
{\em approximate identity} if there exists a constant $K$ such that
for every $\epsilon>0$, and every $f_1,\cdots, f_n\in M $, there
exists an $e\in M $, $\|e\|\leq K$, such that $\|ef_i-f_i\|<\epsilon$
for all $i=1,\cdots , n$. (In other words, there exists a bounded net
$(e_\alpha)$ in $ M $ such that $e_\alpha f\rightarrow f$ for every
$f\in M $.)
\end{definition}

\begin{proposition}
Let $R$ be a uniform algebra on $X$, and let $x\in X$. Then the 
following are equivalent:
\begin{enumerate}
\item $($Existence of an approximate identity.$)$ The maximal ideal
$$
 M :=\{f\in R:f(x)=0\}
$$ 
has an approximate identity.
\item $($Existence of a weak peak function.$)$
There exists a function $f\in R$ with $\|f\|=1$,  $f(x)=1$, and  such that for every neighbourhood $U$ of $x$,
we have   $|f(y)|<1$ for all $y\in X\setminus U$.
\item  There exists a constant
  $K$, such that for every neighbourhood $U$ of $x$, and every
  $\epsilon>0$, there exists an $f\in R$ with $\|f\|<K$, $f(x)=1$, and
  $|f(y)|<\epsilon$ for all $y\in X\setminus U$.
\end{enumerate}
\end{proposition}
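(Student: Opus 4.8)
The plan is to establish the two equivalences $(1)\Leftrightarrow(3)$ and $(2)\Leftrightarrow(3)$. The implications $(1)\Leftrightarrow(3)$ and $(2)\Rightarrow(3)$ are elementary manipulations; the whole substance of the proposition lies in $(3)\Rightarrow(2)$, where one has to manufacture a single peak function out of the uniformly norm-bounded family furnished by $(3)$. Throughout set $M:=\{f\in R:f(x)=0\}$.

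\emph{The equivalence $(1)\Leftrightarrow(3)$.} For $(3)\Rightarrow(1)$, let $f_1,\dots,f_n\in M$ and $\epsilon>0$ be given. Since each $f_i$ is continuous with $f_i(x)=0$, there is an open neighbourhood $U$ of $x$ on which $|f_i|<\epsilon/(2K)$ for all $i$. Apply $(3)$ to this $U$ with tolerance $\delta:=\epsilon/\bigl(2(1+\max_i\|f_i\|)\bigr)$, obtaining $g\in R$ with $\|g\|<K$, $g(x)=1$ and $|g|<\delta$ on $X\setminus U$. Then $e:=1-g\in M$, $\|e\|\le 1+K$, and $\|ef_i-f_i\|=\|gf_i\|<\epsilon$, the estimate being split between $U$, where $|gf_i|\le K\cdot\epsilon/(2K)$, and $X\setminus U$, where $|gf_i|<\delta\|f_i\|$. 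Hence $M$ has an approximate identity, with constant $1+K$. For $(1)\Rightarrow(3)$, fix a neighbourhood $U$ of $x$ and $\epsilon>0$. Because $R$ separates points and $1\in R$, to each $y\in X\setminus U$ we may associate $g_y:=\varphi_y-\varphi_y(x)\cdot 1\in M$ with $g_y(y)\ne 0$, where $\varphi_y\in R$ separates $x$ from $y$; by continuity $|g_y|>c_y>0$ on an open set $W_y\ni y$, and the compactness of $X\setminus U$ yields a finite subcover $W_{y_1},\dots,W_{y_m}$. With $c:=\min_j c_{y_j}>0$, apply the approximate identity to $g_{y_1},\dots,g_{y_m}$ at tolerance $c\epsilon$, obtaining $e\in M$ with $\|e\|\le K$ and $\|eg_{y_j}-g_{y_j}\|<c\epsilon$ for all $j$. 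Then $f:=1-e$ satisfies $f(x)=1$, $\|f\|\le 1+K$, and for $y\in W_{y_j}\subseteq X\setminus U$ we get $|f(y)|\,|g_{y_j}(y)|=|g_{y_j}(y)-e(y)g_{y_j}(y)|<c\epsilon$, hence $|f(y)|<\epsilon$; this is $(3)$, with a constant depending only on $K$.

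\emph{The implication $(2)\Rightarrow(3)$} is immediate by taking powers: if $f$ is a weak peak function, then for a given neighbourhood $U$ of $x$ the compact set $X\setminus U$ has $\rho:=\max_{X\setminus U}|f|<1$, and $f^m$ satisfies $f^m(x)=1$, $\|f^m\|=1$ and $|f^m|\le\rho^m<\epsilon$ on $X\setminus U$ once $m$ is large; so $(2)$ gives $(3)$ with constant $1$.

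\emph{The main step $(3)\Rightarrow(2)$}, which is the real obstacle, requires assembling from the local data in $(3)$ a single $f\in R$ with $\|f\|=1$, $f(x)=1$ and $\{y\in X:|f(y)|=1\}=\{x\}$; this last condition is equivalent to the formulation in $(2)$ because $X\setminus U$ is compact. One fixes a decreasing neighbourhood base $(U_n)_{n\ge 0}$ at $x$ --- this is the point at which a countable base at $x$ (for instance, metrizability of $X$) is used --- chooses inductively $f_n\in R$ from $(3)$ adapted to $U_{n-1}$, and then shrinks $U_n\subseteq U_{n-1}$ so that $f_1,\dots,f_n$ are all within a preassigned small tolerance of $1$ on $U_n$; the desired $f$ is then obtained as a rapidly convergent combination of the $f_n$ --- a weighted series $\sum_n w_n f_n$, or equivalently an infinite product of renormalized factors. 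Evaluating at a point $y\ne x$ and letting $N$ be the last index with $y\in U_N$, every term of index $\le N$ is close to $1$ and every term of index $\ge N+2$ is negligible, so only a single block is uncontrolled. The delicate point --- the real obstacle --- is to calibrate the weights $w_n$ (their geometric ratio depending on $K$) and the successive tolerances finely enough that, in spite of this block and the accumulated errors, one still gets $|f(y)|<1$, while $f(x)=1$ forces $\|f\|=1$. This bookkeeping is carried out in \cite[Lemma~1.6.3 and Theorem~1.6.5]{B}, whose argument we follow. Combined with the elementary implications above, this closes the circle and proves that $(1)$, $(2)$ and $(3)$ are equivalent.
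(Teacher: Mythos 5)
Your proofs of $(1)\Leftrightarrow(3)$ and $(2)\Rightarrow(3)$ are correct and complete --- indeed more detailed than the paper, which states the proposition without proof and simply refers to \cite[Lemma~1.6.3]{B}. The problem is the implication $(3)\Rightarrow(2)$, which you yourself call the real obstacle: you do not prove it. You sketch the standard series construction, insert the hypothesis that $x$ has a countable neighbourhood base (``for instance, metrizability of $X$''), which appears nowhere in the statement, and then delegate the remaining bookkeeping back to \cite[Lemma~1.6.3 and Theorem~1.6.5]{B} (the latter is Cohen's factorization theorem and is not relevant here). This is a genuine gap, not an omission of routine estimates: without a countability assumption at $x$, the implication $(3)\Rightarrow(2)$ as literally stated is false, so no calibration of weights and tolerances can close it. Indeed, a function $f$ as in $(2)$ satisfies $|f(y)|<1$ for every $y\neq x$ (note $X$ is automatically Hausdorff since $R$ separates points), hence $\{x\}=\bigcap_{n}\{y\in X:|f(y)|>1-1/n\}$ is a $G_\delta$ set. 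Now take $X=[0,1]^I$ with $I$ uncountable and $R=C(X;\mC)$: Urysohn functions show that $(3)$ holds at every point (and then $(1)$ holds too, by your own argument), but no singleton of $X$ is a $G_\delta$, so $(2)$ fails at every point.

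What this shows is that condition $(2)$ is only tenable with the quantifiers in the other order, which is the form in which the cited source proves it and the form the paper actually uses later (weak peak points, density of the Choquet boundary, Cohen factorization): \emph{for every} neighbourhood $U$ of $x$ there exists $f\in R$, depending on $U$, with $\|f\|=1=f(x)$ and $|f(y)|<1$ for all $y\in X\setminus U$. Alternatively one keeps $(2)$ as written --- a genuine peak function --- but must add the hypothesis that $x$ admits a countable neighbourhood base, under which your sketched construction does work. As it stands, your proposal neither establishes the hard implication (it is outsourced to the very reference the paper cites, and with an extra hypothesis) nor flags that the literal statement needs one of these amendments. To repair it, either state and prove the per-neighbourhood version, or assume first countability at $x$ explicitly and carry out the series/product construction in full, since that calibration is precisely the substance of the direction you identify as the main step.
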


\noindent In (2), the point $x$ is referred to as a {\em weak peak point}. 

\begin{proposition}[Cohen Factorization Theorem]
Let $R$ be a commutative unital Banach algebra, $ M $ a maximal 
ideal of $R$, and suppose that $ M $ has an approximate identity. Then
for every $f\in M $, there exist $f_1,f_2\in M $ such that $f=f_1f_2$.
\end{proposition}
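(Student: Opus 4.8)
The plan is to prove the statement by the classical argument of Cohen: one builds, out of the bounded approximate identity, a convergent infinite product of \emph{invertible} elements of $R$ whose limit is forced to lie inside $M$. The basic difficulty is that an approximate identity by itself only delivers the \emph{approximate} factorization $f\approx ef$, and the whole task is to upgrade this to an exact factorization. I would do this by constructing, in tandem, a Cauchy sequence of invertibles $(c_n)$ in $R$ and the sequence $g_n:=c_n^{-1}f$ in $M$, arranged so that the exact identities $f=c_ng_n$ survive passage to the limit. Note also that $M$, being a maximal ideal of a Banach algebra, is closed (and of course proper); this will be used repeatedly.

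First I would fix the scaling. Let $(e_\alpha)$ denote the approximate identity in $M$, bounded by $K$ (we may assume $K\ge1$), and fix $f\in M$. Choose a real number $\lambda$ with $0<\lambda<\frac1{1+K}$. Then for every $e\in M$ with $\|e\|\le K$ the element
$$
d:=1-\lambda(1-e)=(1-\lambda)\cdot1+\lambda e
$$
is invertible in $R$ by the Neumann series (since $\|\lambda(1-e)\|\le\lambda(1+K)<1$), with $\|d^{-1}\|\le L:=(1-\lambda(1+K))^{-1}$, and $d\equiv(1-\lambda)\cdot1$ modulo $M$.

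Next I would run the following induction. Set $c_0:=1$, $g_0:=f$, $m_0:=0$. Given an invertible $c_{n-1}\in R$ of the form $c_{n-1}=(1-\lambda)^{n-1}\cdot1+m_{n-1}$ with $m_{n-1}\in M$, and given $g_{n-1}:=c_{n-1}^{-1}f\in M$, I apply the approximate identity property to the \emph{two} elements $m_{n-1},g_{n-1}\in M$ at once to pick $e_n\in M$, $\|e_n\|\le K$, with
$$
\|e_nm_{n-1}-m_{n-1}\|<2^{-n}\qquad\text{and}\qquad\|e_ng_{n-1}-g_{n-1}\|<2^{-n}.
$$
Then set $d_n:=(1-\lambda)\cdot1+\lambda e_n$, $c_n:=c_{n-1}d_n$ (invertible, being a product of invertibles), and $g_n:=d_n^{-1}g_{n-1}=c_n^{-1}f$, which lies in $M$ since $M$ is an ideal. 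A one-line computation gives $c_n=(1-\lambda)^n\cdot1+m_n$ with $m_n:=\lambda(1-\lambda)^{n-1}e_n+(1-\lambda)m_{n-1}+\lambda m_{n-1}e_n\in M$, so the induction continues.

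Finally I would pass to the limit. From $c_n-c_{n-1}=\lambda c_{n-1}(e_n-1)=\lambda\big((1-\lambda)^{n-1}(e_n-1)+(e_nm_{n-1}-m_{n-1})\big)$ one gets $\|c_n-c_{n-1}\|\le\lambda\big((1+K)(1-\lambda)^{n-1}+2^{-n}\big)$, which is summable, so $c_n\to c$ in $R$; since $m_n=c_n-(1-\lambda)^n\cdot1\in M$, closedness of $M$ together with $(1-\lambda)^n\to0$ forces $c\in M$ (which is also why $c$ cannot be invertible — as it must not be, $M$ being proper). Likewise $g_n-g_{n-1}=\lambda d_n^{-1}(g_{n-1}-e_ng_{n-1})$ gives $\|g_n-g_{n-1}\|\le\lambda L\cdot2^{-n}$, so $g_n\to g\in M$. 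Since $f=c_ng_n$ for every $n$ and multiplication is continuous, letting $n\to\infty$ yields $f=cg$ with $c,g\in M$; then $f_1:=c$ and $f_2:=g$ are the desired factors. The hard part — and the only place any cleverness enters — is exactly this last step: the scalar parts $(1-\lambda)^n\cdot1$ are driven to zero by the choice $0<\lambda<1$, which drags $c=\lim c_n$ into the closed ideal $M$ for free, while the summability that makes $(g_n)$ Cauchy, and that controls the correction term $m_{n-1}$ appearing in $(c_n)$, is bought by spending the approximate identity on those two elements at each stage; everything else is routine Neumann-series estimation.
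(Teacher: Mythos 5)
Your proof is correct: the induction is well-founded (each $e_n$ is chosen after $m_{n-1}$ and $g_{n-1}$ are in hand), the Neumann-series bound $\|d_n^{-1}\|\le L$, the telescoping estimates making $(c_n)$ and $(g_n)$ Cauchy, and the use of closedness of the maximal ideal $M$ to keep the limits in $M$ all check out. The paper itself offers no proof, only a citation to Browder's book, and your argument is the classical Cohen factorization construction used there; packaging the approximating elements as a product of invertibles $c_n=\prod_k\bigl((1-\lambda)\cdot 1+\lambda e_k\bigr)$ rather than the usual additive expression is only a minor stylistic variation, convenient in the commutative setting.
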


\noindent An immediate consequence of these results is the following. 

\begin{corollary}
Let $R$ be a uniform algebra on $X$, and let $x\in X$. Suppose that 
$x$ is a weak-peak point. 
Set $M :=\{f\in R:f(x)=0\}$. Then  
$$
ki( M )= M .
$$
\end{corollary}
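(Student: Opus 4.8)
The plan is to establish the nontrivial inclusion $M\subseteq ki(M)$, since $ki(M)=\bigcap_{n=1}^\infty M^n\subseteq M$ holds trivially. So it suffices to show that every $f\in M$ lies in $M^n$ for all $n\in\mN$.

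The starting point is that $x$ is a weak peak point, so by the equivalence of conditions (1) and (2) in the Proposition on approximate identities and weak peak functions, the maximal ideal $M=\{f\in R:f(x)=0\}$ has an approximate identity. This is precisely the hypothesis required to apply the Cohen Factorization Theorem, which tells us that every $g\in M$ can be written as $g=g'g''$ with $g',g''\in M$.

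The remaining step is a straightforward induction. Fix $f\in M$. For $n=1$ we have $f\in M=M^1$. Assuming $f$ can be written as a product $f=h_1h_2\cdots h_n$ with each $h_i\in M$, apply the Cohen Factorization Theorem to the last factor to get $h_n=h_n'h_n''$ with $h_n',h_n''\in M$; then $f=h_1\cdots h_{n-1}h_n'h_n''$ is a product of $n+1$ elements of $M$. Since $M^{n+1}$ contains all finite sums of products of $n+1$ elements of $M$, in particular it contains this single product, so $f\in M^{n+1}$. As $n\in\mN$ was arbitrary, $f\in\bigcap_{n=1}^\infty M^n=ki(M)$, and therefore $M\subseteq ki(M)$, giving $ki(M)=M$.

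I do not expect any genuine obstacle here: the two cited propositions (existence of a weak peak function $\Leftrightarrow$ existence of an approximate identity, and the Cohen Factorization Theorem) do all the work, and the only point worth stating explicitly is that producing a single length-$n$ product of elements of $M$ already witnesses membership in $M^n$, so no summation is needed.
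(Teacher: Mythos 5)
Your argument is correct and follows essentially the same route as the paper: the weak peak point hypothesis yields an approximate identity for $M$, and the Cohen Factorization Theorem then gives $M=M^2$, from which $M=M^n$ for all $n$ and hence $ki(M)=M$. The paper simply states $M^2=M$ and leaves the iteration implicit, whereas you spell out the induction; there is no substantive difference.
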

\begin{proof} 
Since $M$ is maximal, by the Cohen Factorization Theorem, we have 
$M^2= M$.
\end{proof}

 \noindent Thus, for every uniform algebra $R$ we have ``many'' ideals $M$ with $M^2=M$, 
 namely any maximal ideal $M$ of the form $M_x=\{f\in R:f(x)=0\}$, where 
$x$ is a weak peak point. See \cite[p. 101]{B} and also \cite{DU}. 
We emphasize that the set of weak peak points (sometimes called 
the {\em Choquet boundary of $R$}; see also \cite[Definition on p.87 and Theorem 2.3.4]{B}) 
is dense in the \v{S}ilov  boundary of $R$  (by \cite[Corollary~4.3.7(i)]{Dal}). 
We recall here that a closed subset $F$ of $X$ is called a {\em closed 
boundary} for $R$ if 
$$
\sup_{x\in F}|f(x)|=\sup_{x\in X}|f(x)|.
$$
The intersection of all the closed boundaries for $R$ is 
called the {\em \v{S}ilov boundary of $R$}. 

\begin{example}[Disk algebra and Wiener algebra]  
As illustrative examples, consider the disk algebra and the Wiener 
algebra. Let
$$
\mD:=\{z\in \mC:|z|<1\},
$$
and set  
\begin{eqnarray*}
A(\mD)&:=&\{ f\in H(\mD):  
f \textrm{ has a continuous extension to }\mD\cup \partial \mD\},
\phantom{\Big\{}\\
W^+(\mD)&:=&\Big\{f=\sum_{n=0}^\infty a_nz^n\in H(\mD): 
\|f\|_1:=\sum_{n=0}^\infty |a_n|<\infty\Big\},
\end{eqnarray*}
with pointwise operations. $A(\mD)$ is endowed with the sup-norm
$\|\cdot\|_\infty$, while $W^+(\mD)$ is endowed with the
$\|\cdot\|_1$-norm defined above. The maximal ideal
 $
 M:=\{f\in A(\mD):f(1)=0\}
$ 
has an approximate identity given by the sequence $(1-p^n)_{n\in
  \mN}$, where $p$ is the peak function given by
$$
p:=\frac{1+z}{2},\quad z\in \mD,
$$
(for details
of the proof, we refer the reader to \cite[Theorem~6.6]{Sas}.)

Let $(r_n)_{n\in \mN}$ be any sequence such that $r_n \searrow 1$, and
$$
e_n(z):= \frac{z-1}{z-r_n}, \quad n\in \mN.
$$
Then $(e_n)_{n\in \mN}$ is a bounded approximate identity for
$$
 M:=\{f\in W^+(\mD) : f(1)=0\}.
$$
A rather lengthy proof of this result in the case when
$$
r_n=1+\frac{1}{n}, \quad n\in \mN,
$$ 
can be found in \cite{Kou}, while the result is also mentioned without
proof in \cite{Fai}. A short proof due to the first author of this
article can be found in \cite{MorSas} or in \cite[Theorem~6.10]{Sas}.
\hfill$\Diamond$
\end{example}

\section{$ki(I)$ for ideals $I$ in $H^\infty(\mD)$}
\label{section_4}

\noindent Let $H^\infty(\mD)$ denote the algebra of all bounded holomorphic
functions in $\mD$. We sometimes write $\H$ instead of $H^\infty(\mD)$. 
The spectrum (or maximal ideal space), $M(\H)$  of $\H$ is the set of nonzero multiplicative 
linear functionals on $\H$.

\begin{observation}
\label{free}
Let $I$ be an ideal in $\H$. Suppose that $I$ is a non-free ideal; 
that is,
$$
Z_\D(I):=\displaystyle \Inter_{f\in I} Z_{\D}(f)\not=\emp.
$$
Then  $ki(I)=\{0\}$.
\end{observation}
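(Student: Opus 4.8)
The plan is to imitate verbatim the proof given earlier for proper \emph{fixed} ideals of $H(D)$, the point being that $\H\subseteq H(\D)$ and $Z_\D(I)$ refers to common zeros \emph{inside the disk} $\D$ itself, so that the argument is purely function-theoretic. If $I=\{0\}$ there is nothing to prove, so assume $I\neq\{0\}$. First I would use the hypothesis $Z_\D(I)\neq\emp$ to fix a point $\zeta\in\D$ with $f(\zeta)=0$ for every $f\in I$, and set
$$
m:=\min\{\ord(f,\zeta): f\in I\setminus\{0\}\}.
$$
Since every nonzero $f\in I$ vanishes at $\zeta$, we have $m\geq 1$.

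Next I would check that every $g\in I^n$ has a zero of order at least $mn$ (in particular at least $n$) at $\zeta$. Indeed, by definition $g=\sum_{i=1}^{k} a_{1,i}\cdots a_{n,i}$ with all $a_{j,i}\in I$; each summand $a_{1,i}\cdots a_{n,i}$ is a product of $n$ functions each vanishing at $\zeta$ to order $\geq m$, hence vanishes there to order $\geq mn$, and therefore so does the finite sum $g$.

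Finally, suppose $0\neq f\in ki(I)=\Inter_{n\geq 1}I^n$. Then $f$ would have a zero of order at least $n$ at $\zeta$ for every $n\in\N$; since $f$ is holomorphic on the connected open set $\D$, the identity theorem forces $f\equiv 0$, a contradiction. Hence $ki(I)=\{0\}$.

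There is essentially no obstacle in this argument; the only thing one must be careful about is the interpretation of $Z_\D(I)$ as a set of genuine zeros in $\D$ (as opposed to zeros in the larger maximal ideal space $M(\H)$), which is exactly what makes the elementary order-of-vanishing estimate and the identity theorem applicable, and which is why no special property of $\H$ beyond $\H\subseteq H(\D)$ is needed.
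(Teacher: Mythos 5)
Your proof is correct and follows essentially the same route as the paper: pick a common zero $\zeta\in\D$ of all elements of $I$, observe that every element of $I^n$ vanishes at $\zeta$ to order at least $n$, and conclude via the identity theorem that any element of $ki(I)$ is identically zero. The paper merely phrases the degenerate case as $Z_\D(I)=\D$ (forcing $I=\{0\}$) rather than assuming $I\neq\{0\}$ outright, but the substance is identical.
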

\begin{proof}
If $Z_\D(I)=\D$, then $I=(0)$ and so $ki(I)=\{0\}$. So suppose that there exists an isolated point $z_0\in Z_\D(I)$. 
Let $f\in ki(I)$ and $n\in \N$ be given. Then $\ord(f,z_0)\geq n$. Hence $f\equiv 0$.  Again $ki(I)=\{0\}$.
\end{proof}

\noindent A description of the maximal ideals $M$ in $\H$ with
$ki(M)=\{0\}$ is already implicit in Kenneth Hoffman's work
\cite{hof}.  Recall that $\widehat{f}\in C(M(\H);\mC)$ denotes the 
{\em Gelfand transform} of $f\in \H$
$$
m\stackrel{\widehat{f}}{\longmapsto} m(f)=: \widehat{f}(m), 
\quad m\in M(\H).
$$

 For $m\in M(\H)$, and $f\in \H$, let us define
$$
\ord (f,m)=\ord (\widehat f\circ L_m,0),
$$
where $L_m:\D\to P(m)$ is
the {\em Hoffman map} associated with $m$; that is 
$$
L_m(z)=\lim \frac{z+z_\alpha}{1+\ov z_\alpha z},
$$
where $(z_\alpha)$ is a net in $\D$ converging to $m$ (all limits
being taken in the $\textrm{weak-}\ast$/Gelfand topology of
$M(\H)$). Note that $\widehat f\circ L_m$ is holomorphic in $\D$. In
particular, $\ord(f,m)=\infty$ if and only if $\widehat f\equiv 0$ on
$P(m)$, where $P(m)$ denotes the Gleason part containing $m$. Recall the
pertinent definitions below.

\begin{definition}[Pseudohyperbolic distance and the Gleason part] 
The {\em pseudohyperbolic distance between two points 
$m,\widetilde{m}\in M(\H)$} is defined by 
$$
\rho(m,\widetilde{m})
:=
\sup \Big\{ |\widehat{f}(\widetilde{m})|: 
f\in H^\infty,\;\|f\|_\infty\leq 1, \;\widehat{f}(m)=0\Big\}.
$$
For $m\in M(\H)$, let 
$$
P(m):=\{\widetilde{m}\in M(\H):\rho(m,\widetilde{m})<1\}
$$
denote the {\em Gleason part of $M(\H)$ containing $m$}. 
\end{definition}

\noindent By \cite{hof},
$$
\ord (f,m)=
\sup\Big\{n\in \N:  f=f_1\dots f_n,\;\; \widehat f_j(m)=0 
\textrm{ for all }j=1,\dots, n\Big\}.
$$

\begin{lemma}\label{ord}
Let $I$ be an ideal in $\H$. Then $\ord (f,m)=\infty$ 
for every $f\in ki(I)$ and $m\in Z(I)$.
\end{lemma}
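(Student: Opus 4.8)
The plan is to pass to the analytic description $\ord(f,m)=\ord(\widehat f\circ L_m,0)$ and exploit two facts: the Gelfand transform is an algebra homomorphism, and the Hoffman map satisfies $L_m(0)=m$. So I would fix $f\in ki(I)$, a point $m\in Z(I)$, and an arbitrary $n\in\N$; it then suffices to prove $\ord(f,m)\ge n$, since $n$ is arbitrary.

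First, since $f\in ki(I)\subseteq I^n$, the definition of $I^n$ lets me write
$$
f=\sum_{k=1}^N g_{1,k}\cdots g_{n,k},\qquad N\in\N,\ g_{j,k}\in I .
$$
Applying the (multiplicative) Gelfand transform and then composing with the Hoffman map $L_m:\D\to P(m)$ — which likewise respects products — yields the identity of functions holomorphic on $\D$
$$
\widehat f\circ L_m=\sum_{k=1}^N\ \prod_{j=1}^n\bigl(\widehat{g_{j,k}}\circ L_m\bigr).
$$

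Next, the hypothesis $m\in Z(I)$ means $\widehat g(m)=0$ for every $g\in I$, so in particular $\widehat{g_{j,k}}(m)=0$; since $L_m(0)=m$, each factor $\widehat{g_{j,k}}\circ L_m$ vanishes at the origin (or is identically $0$). Hence each summand $\prod_{j=1}^n(\widehat{g_{j,k}}\circ L_m)$ has a zero of order at least $n$ at $0$, and therefore so does the finite sum $\widehat f\circ L_m$. This gives $\ord(f,m)=\ord(\widehat f\circ L_m,0)\ge n$, and letting $n\to\infty$ yields $\ord(f,m)=\infty$.

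I do not anticipate a genuine obstacle here: the only points needing care are that the purely algebraic expression $f=\sum_k\prod_j g_{j,k}$ transforms term by term into the stated expression for $\widehat f\circ L_m$, and the elementary remark that the vanishing order at a point of a finite sum of holomorphic functions is at least the minimum of the vanishing orders of the summands (the relevant Taylor coefficients all vanish). One could alternatively try to argue from the product characterization of $\ord(\cdot,m)$ recalled just above the lemma, but that characterization is phrased for a single product, whereas a sum of products is handled more transparently through the analytic definition via $L_m$.
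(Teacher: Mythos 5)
Your proof is correct and follows essentially the same route as the paper: decompose $f\in ki(I)\subseteq I^n$ as a finite sum of $n$-fold products of elements of $I$, observe each factor's Gelfand transform vanishes at $m\in Z(I)$, and conclude $\ord(f,m)\geq n$ for every $n$. The paper states this last step without elaboration, while you make explicit the passage through $\widehat f\circ L_m$ and the fact that the vanishing order at $0$ of a finite sum is at least the minimum over the summands; this is a welcome filling-in of detail, not a different argument.
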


\noindent Here $Z(I):=Z_{M(\H)}(I):=\displaystyle \Inter_{f\in I}
\Big\{m\in M(\H):\widehat{f}(m)=0 \Big\}.$
\begin{proof}
Let $f\in ki(I)$. Fix $n\in\N$. Then 
$$
f=\sum_{k=1}^K f_{k,1}\dots f_{k,n}
$$ 
for $f_{k,\ell}\in I$.  In particular, $\widehat{f_{k,\ell}}(m)=0$ for
every $m\in Z(I)$.  Hence $\ord (f,m)\geq n$. Since $n$ was arbitrary,
we conclude that $\ord (f,m)=\infty$.
\end{proof}

\begin{theorem}
Let $ M$ be a maximal ideal in $\H$ and $m\in M(\H)$ with 
$\ker m= M$. Then the following assertions are equivalent:
\begin{enumerate}
\item[(1)] $ki( M)= M$.
\item[(2)]  $ M$ does not contain an interpolating Blaschke product.
\item[(3)] The Gleason part $P(m)$ containing $m$ is the singleton
  $\{m\}$.
\end{enumerate}
In all cases, that is for all maximal ideals in $\H$,
$$
ki( M)=I\Big(\ov {P(m)},\H\Big)
:=\Big\{f\in \H: \widehat{f}\equiv 0\;\textrm{ on }\ov {P(m)}\Big\},
$$
where $\ov E$ denotes the closure of the set $E\ss M(\H)$.
In particular, $ki( M)$ is a closed prime ideal, and 
$$
ki( M)=\{0\}\ssi  M= M_{z_0}
:=\{f\in\H: f(z_0)=0\}\textrm{ for some }z_0\in \D.
$$
\end{theorem}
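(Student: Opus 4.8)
The strategy is to prove the chain of implications $(3)\Rightarrow(1)\Rightarrow(2)\Rightarrow(3)$, together with the identification $ki(M)=I(\ov{P(m)},\H)$ which actually does most of the work, and then to read off the remaining consequences. The key external inputs are Hoffman's structure theory from \cite{hof}: the Hoffman map $L_m$, the fact that $P(m)$ is either a singleton or an analytic disc (the image of $L_m$), the characterization of $\ord(f,m)$ recalled above, and the classical fact that $M$ contains an interpolating Blaschke product if and only if $P(m)$ is nontrivial (equivalently, $m$ lies in the closure of an interpolating sequence). I would also use Lemma~\ref{ord}, which already gives $ki(M)\ss I(\ov{P(m)},\H)$ once one checks that $\ord(f,m)=\infty$ for all $m'\in P(m)$ forces $\widehat f\equiv 0$ on $P(m)$, hence on $\ov{P(m)}$ by continuity of the Gelfand transform.

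\medskip

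\noindent \textbf{Step 1: $ki(M)\ss I(\ov{P(m)},\H)$.} Apply Lemma~\ref{ord} with $I=M$. Since $m\in Z(M)$, every $f\in ki(M)$ has $\ord(f,m)=\infty$, i.e.\ $\widehat f\circ L_m\equiv 0$, i.e.\ $\widehat f\equiv 0$ on $P(m)$. As $\widehat f$ is continuous on $M(\H)$, it vanishes on $\ov{P(m)}$. Thus $f\in I(\ov{P(m)},\H)$.

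\medskip

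\noindent \textbf{Step 2: $I(\ov{P(m)},\H)\ss ki(M)$.} This is the crux. Fix $f\in\H$ with $\widehat f\equiv 0$ on $\ov{P(m)}$; in particular $\widehat f(m)=0$, so $f\in M$. Fix $n\in\N$; I must factor $f$ (up to a finite sum, or better as a single product) into $n$ factors lying in $M$. If $P(m)=\{m\}$ is trivial, then $\ord(f,m)=\infty$ by definition, and Hoffman's characterization of $\ord(f,m)$ as a sup over factorizations $f=f_1\cdots f_n$ with $\widehat{f_j}(m)=0$ gives exactly such a factorization with each $f_j\in\ker m=M$; hence $f\in M^n$. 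If $P(m)$ is a nontrivial (analytic) part, then $\ord(f,m)=\infty$ still holds since $\widehat f\equiv 0$ on $P(m)$, and the same characterization applies. Either way $f\in M^n$ for all $n$, so $f\in ki(M)$. Combined with Step 1 this yields $ki(M)=I(\ov{P(m)},\H)$ for \emph{every} maximal ideal $M$.

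\medskip

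\noindent \textbf{Step 3: the equivalences and consequences.} $ki(M)$ is an intersection of kernels $\{\widehat f=0$ on $\ov{P(m)}\}$, hence the kernel of the restriction map $\H\to C(\ov{P(m)};\mC)$; being a kernel of a homomorphism into a function algebra on a connected set (an analytic disc or a point) whose image is an integral domain, $ki(M)$ is a closed prime ideal. For $(1)\Leftrightarrow(3)$: $ki(M)=M$ iff $I(\ov{P(m)},\H)=M$; if $P(m)=\{m\}$ then $I(\ov{P(m)},\H)=\ker m=M$, giving $(3)\Rightarrow(1)$. Conversely if $P(m)$ is nontrivial, pick $m'\in P(m)\setminus\{m\}$ (or use the interpolating Blaschke product below) and a function $g\in M$ with $\widehat g(m')\neq0$ — such $g$ exists since $M(\H)\setminus\{m'\}$ is the hull of a function not vanishing at $m'$, or more simply because the Gelfand transforms separate points — so $g\in M\setminus ki(M)$ and $(1)$ fails; this gives $(1)\Rightarrow(3)$. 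For $(2)\Leftrightarrow(3)$: by Hoffman's theory, $M$ contains an interpolating Blaschke product precisely when $m$ lies in the closure of an interpolating sequence, which happens exactly when $P(m)$ is a nontrivial part; so $(2)$ (no such product) $\Leftrightarrow$ $P(m)=\{m\}$ $\Leftrightarrow (3)$. Finally $ki(M)=\{0\}$ iff $I(\ov{P(m)},\H)=\{0\}$ iff the only $f\in\H$ vanishing on $\ov{P(m)}$ is $0$; since any nonzero $f\in\H$ vanishing on an infinite part would have a zero of infinite order there, this forces $\ov{P(m)}$ to contain an \emph{isolated} point of $\D$, i.e.\ $P(m)=\{z_0\}$ for some $z_0\in\D$, which is equivalent to $M=M_{z_0}$.

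\medskip

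\noindent \textbf{Main obstacle.} The delicate point is Step 2: converting the vanishing statement ``$\widehat f\equiv 0$ on $\ov{P(m)}$'' into honest factorizations $f=f_1\cdots f_n$ with each $f_j\in\H$ and $\widehat{f_j}(m)=0$. This is precisely the content of Hoffman's factorization result quoted before Lemma~\ref{ord} — that $\ord(f,m)$ equals the sup of $n$ over such factorizations — so the real work is to cite and correctly invoke \cite{hof}, and to be careful that when $P(m)$ is nontrivial the condition $\widehat f\equiv0$ on $P(m)$ (equivalently $\widehat f\circ L_m\equiv 0$) genuinely forces $\ord(f,m)=\infty$ in the sense of that characterization. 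The rest is bookkeeping with continuity of Gelfand transforms and the dictionary between nontrivial parts and interpolating Blaschke products.
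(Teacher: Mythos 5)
Your Steps 1 and 2, and your treatment of the equivalences (1)--(3), run along essentially the same lines as the paper's proof: both rest on Hoffman's order function $\ord(f,m)$, its characterization as a supremum over factorizations $f=f_1\cdots f_n$ with $\widehat{f_j}(m)=0$, and Hoffman's theorem that $M$ contains an interpolating Blaschke product exactly when $P(m)$ is nontrivial; this part is sound. Two small points you should tighten: in $(1)\Rightarrow(3)$, separation of points only gives $f$ with $\widehat f(m)\neq \widehat f(m')$ for your chosen $m'\in P(m)\setminus\{m\}$, so you must pass to $g=f-\widehat f(m)\cdot 1\in M$; and primeness of $I(\ov{P(m)},\H)$ should be justified by the fact that $\widehat f\circ L_m$ is holomorphic on $\D$ (so a product vanishing identically on the part forces one factor to vanish on it) --- ``kernel of restriction to a connected set'' alone proves nothing, since $C(X;\mC)$ on a connected $X$ is not an integral domain.

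The genuine gap is the final equivalence $ki(M)=\{0\}\ssi M=M_{z_0}$. Your argument for the implication $ki(M)=\{0\}\Rightarrow M=M_{z_0}$ does not work: the conclusion ``$P(m)=\{z_0\}$ for some $z_0\in\D$'' never occurs, because the Gleason part of every point of $\D$ is all of $\D$ (one-point parts consist of corona points), and the claim that a nonzero $f$ vanishing on an infinite part must have a zero of infinite order there and hence vanish identically is false for parts lying in $M(\H)\setminus\D$. What actually has to be shown is that for every $m\in M(\H)\setminus\D$ there exists a nonzero function whose Gelfand transform vanishes on $\ov{P(m)}$, so that $ki(M)=I\big(\ov{P(m)},\H\big)\neq\{0\}$; your heuristic, if it were true, would give the opposite. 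The standard repair: $P(m)$ is contained in a single fiber $M_\alpha=\{\mu\in M(\H): \widehat z(\mu)=\alpha\}$ for some $\alpha\in\mT$; the fiber is closed, so $\ov{P(m)}\ss M_\alpha$, and the nonzero function $z-\alpha$ satisfies $\widehat{(z-\alpha)}\equiv 0$ on $M_\alpha$, hence lies in $ki(M)$. Conversely, if $m=z_0\in\D$, then $P(m)=\D$ and $\widehat f\equiv 0$ on $\D$ forces $f=0$, so $ki(M_{z_0})=\{0\}$. This is exactly the content of the two bullet points in the paper's proof (``$\ov{P(m)}$ is proper iff $m\notin\D$'' and ``$P(z_0)=\D$''), which your write-up replaces by an incorrect argument.
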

\begin{proof} By \cite{hof} (see also \cite{gar}), the statements 
(2) and (3) are equivalent.  If (3) holds, then by \cite{hof}, $ M= M^2$ 
(even in the strict sense: each $f\in M$ can be written as $f=g\cdot
h$, where $g,h\in M$). So $ki( M)= M$.

If $b$ is an interpolating Blaschke product contained in $ M$, then
$b\notin  M^2$, because otherwise 
$$
b=\sum_{k=1}^K f_k g_k,
$$
with $f_k,g_k\in M$.  Hence $\ord (b,m)\geq 2$, a contradiction; see
\cite{hof}.  This shows the equivalence of (1), (2) and (3).

To prove the rest, we note that
$$
\Big\{f\in \H: \widehat{f}\equiv 0\;\textrm{ on }\ov {P(m)}\Big\}
=
\{f\in \H: \ord(f,m)=\infty\}.
$$
Moreover, for every $n\in\N$, any such $f$ admits a factorization of
the form $f=g_1\cdots g_n$ with $\widehat{g_k}(m)=0$.  Hence
$$
I\Big(\ov{P(m)},\H\Big)\ss ki( M).
$$
Conversely, if $f\in ki( M)$, then $f$ is a sum of functions in $ M$
each having order at least $n$ at $m$. Thus $\ord (f,m)=\infty$ for every
$f\in ki( M)$ and so $\widehat{f}\equiv 0$ on $P(m)$; see
\cite{hof}. Thus
$$
ki( M)=I\Big(\ov {P(m)},\H\Big).
$$

\noindent Since 

\smallskip 

$\bullet\;$ $\ov {P(m)}$ is a proper subset of $M(\H)$ if and only if
$m\in M(\H)\setminus\D$, and

$\bullet\;$ $P(z_0)=\D$ for every $z_0\in \D$,

\smallskip 

\noindent we conclude that $ki( M)=\{0\}$ if and only if $ M= M_{z_0}$.

\medskip 

\noindent It is easily seen and well-known that 
$ I\Big(\ov{P(m)},\H\Big) $ is a closed prime ideal.
\end{proof}

\noindent Using Izuchi's \cite{izu} extensions of Hoffman's
factorization theorems, we also obtain the following result:

\begin{proposition}
Let $E\ss M(\H)$ be a hull-kernel closed subset of $M(\H)$, that is, 
$E$ is the zero-set of the ideal
$$
I(E,\H) = \Big\{f\in\H:\widehat{f}|_E\equiv 0\Big\}.
$$
Suppose that $E$ is a union of Gleason parts. Then
$$
ki\big(I(E,\H)\big)=I(E,\H).
$$
\end{proposition}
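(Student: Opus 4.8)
The plan is to split off a trivial inclusion and then import Izuchi's factorization machinery for the substantive one. Since $I(E,\H)$ is an ideal of $\H$ (the Gelfand transform being multiplicative), we have $\big(I(E,\H)\big)^n\ss I(E,\H)$ for every $n\in\N$, and hence
$$
ki\big(I(E,\H)\big)=\bigcap_{n=1}^\infty\big(I(E,\H)\big)^n\ss I(E,\H).
$$
So the whole content of the statement is the reverse inclusion $I(E,\H)\ss\big(I(E,\H)\big)^n$ for every $n$, and for that it suffices to show that each $f\in I(E,\H)$ admits, for every $n\in\N$, a factorization $f=g_1\cdots g_n$ with all $g_j\in I(E,\H)$.

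Before that I would dispose of the degenerate cases. If $E=\emp$, then $I(E,\H)=\H$ and there is nothing to prove; if $E$ meets $\D$, say $z_0\in E\cap\D$, then since $P(z_0)=\D$ and $E$ is a union of Gleason parts we get $\D\ss E$, whence every $f\in I(E,\H)$ vanishes identically on $\D$, so $I(E,\H)=\{0\}$, again trivial. Thus one may assume $\emp\neq E\ss M(\H)\setminus\D$.

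For the main case the plan is to invoke Izuchi's \cite{izu} extension of Hoffman's factorization theorems: if $E\ss M(\H)$ is a hull-kernel closed union of Gleason parts and $f\in\H$ satisfies $\widehat f|_E\equiv 0$, then for every $n\in\N$ one can write $f=g_1\cdots g_n$ with $\widehat{g_j}|_E\equiv 0$ for all $j$. Granting this, each $g_j$ lies in $I(E,\H)$, so $f\in\big(I(E,\H)\big)^n$; letting $n$ vary gives $f\in ki\big(I(E,\H)\big)$, and together with the first paragraph this yields $ki\big(I(E,\H)\big)=I(E,\H)$. Should Izuchi's statement be available only in the form of two factors, i.e.\ as $I(E,\H)=\big(I(E,\H)\big)^2$ (even only in the non-strict, finite-sum sense), the same conclusion follows by iteration: $I(E,\H)=\big(I(E,\H)\big)^2=\big(I(E,\H)\big)^4=\cdots$, so $I(E,\H)\ss\big(I(E,\H)\big)^n$ for every $n$.

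The one genuinely hard input is the factorization step, and it is not elementary. On a nontrivial Gleason part it rests on Hoffman's analytic parametrization $L_m\colon\D\to P(m)$ and the fact that $\widehat f\circ L_m\in\H$, while on a trivial one-point part it uses Hoffman's strict identity $\ker m=(\ker m)^2$ recalled in the preceding theorem; the delicate point, which is exactly what Izuchi's work provides, is to perform these factorizations \emph{simultaneously and compatibly} across all the parts making up $E$, keeping every factor inside $I(E,\H)$ — which is why the hypotheses that $E$ be a union of parts and be hull-kernel closed are both needed. Everything else is the soft bookkeeping above; as a final sanity check I would note, exactly as in the preceding theorem, that $ki\big(I(E,\H)\big)$ inherits the closedness and primeness of $I(E,\H)$, although the present statement does not demand this.
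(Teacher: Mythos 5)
Your proposal is correct and follows essentially the same route as the paper: the inclusion $ki\big(I(E,\H)\big)\ss I(E,\H)$ is the trivial one, and the substantive inclusion is obtained by citing Izuchi's factorization theorem to write each $f\in I(E,\H)$ as a product of $n$ factors vanishing on $E$. The only cosmetic difference is that the paper phrases the hypothesis for Izuchi's result via the order condition $\ord(f,m)=\infty$ for all $m\in E$ (which follows from $P(m)\ss E$), whereas you state the factorization directly for hull-kernel closed unions of Gleason parts; this is the same argument.
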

\begin{proof}
Let $f\in I(E,\H)$ and $n\in \N$. Since we have that $\ord(f,m)=\infty$ 
for every $m\in E$ (because, by hypothesis, $m\in E$ implies 
$P(m)\ss E$), it follows that $f\in I(E,\H)$ has a factorization 
$f=f_1\cdots f_n$, with $f_k\in I(E,\H)$; see \cite{izu}.  
Conversely, if $f\in ki(I(E,\H))$, then $\ord (f,m)=\infty$ 
for every $m\in E$.  This yields the assertion.
\end{proof}

\begin{corollary}
Let $I$ be a non-maximal closed prime ideal in $\H$.Then $ki(I)=I$.
\end{corollary}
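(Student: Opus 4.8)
The plan is to reduce the statement to the proposition just established. Concretely, I would show that a non-maximal closed prime ideal $I$ of $\H$ is exactly $I(E,\H)$ for the set $E:=Z(I)$, and that this $E$ is a hull-kernel closed union of Gleason parts; granting that, the proposition gives at once $ki(I)=ki\big(I(E,\H)\big)=I(E,\H)=I$.

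First I would dispose of the easy points. Since $I$ is prime it is in particular proper, so $I\ss\ker m_0$ for some $m_0\in M(\H)$ and hence $Z(I)\neq\emp$. Moreover $I\ss I(Z(I),\H)$ always, and this forces $Z(I)=Z\big(I(Z(I),\H)\big)$, so $E:=Z(I)$ is automatically hull-kernel closed. What remains are (a) the equality $I=I(Z(I),\H)$, and (b) the assertion that $Z(I)$ is a union of Gleason parts; both lie within Hoffman's study of the closed prime ideals of $\H$, \cite{hof} (see also \cite{izu}). For (b) I would argue by contradiction, by the same kind of argument used above for maximal ideals: if $m\in Z(I)$ and $\widetilde m\in P(m)\setminus Z(I)$, choose $f\in I$ with $\widehat f(\widetilde m)\neq0$; then $\ord(f,m)=:k$ is finite (since $\widehat f\not\equiv0$ on $P(m)$) and $\ge1$ (since $\widehat f(m)=0$), so Hoffman's factorization theorem gives $f=g_1\cdots g_k$ with $\widehat{g_j}(m)=0$ and each $\ord(g_j,m)=1$. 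Primeness puts some $g_j$ in $I$; but a prime ideal contained in the maximal ideal $\ker m$ and containing an element that vanishes exactly to first order at a point of a non-trivial part must coincide with $\ker m$ (again by Hoffman's local description), so $I=\ker m$ would be maximal, contrary to hypothesis. Hence $P(m)\ss Z(I)$.

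The hard part is (a): that a closed prime ideal of $\H$ is recovered from its hull. The inclusion $I\ss I(Z(I),\H)$ is immediate; for the reverse one has to use Hoffman's part structure together with the closedness of $I$ in an essential way, and in the writeup I would simply invoke \cite{hof} (and \cite{izu}) for this rather than reprove it. With (a) and (b) in hand, $E=Z(I)$ is a hull-kernel closed union of Gleason parts, so the preceding proposition applies and yields $ki(I)=I$, which completes the proof.
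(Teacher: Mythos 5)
Your overall strategy is the paper's: reduce to the preceding proposition by showing that $I=I\big(Z(I),\H\big)$ and that $E:=Z(I)$ is a (hull-kernel closed) union of Gleason parts. The genuine gap is where you park the hard work. The fact you defer -- that a non-maximal \emph{closed} prime ideal of $\H$ is recovered from its hull, which is a union of Gleason parts -- is not in Hoffman \cite{hof}, nor in Izuchi \cite{izu}. It is precisely Alling's conjecture, which remained open for decades and was proved by Gorkin and Mortini \cite{gomo}; that is the reference this corollary actually rests on. As written, your proof delegates its entire substance to sources that do not contain the needed theorem.

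Your sketch of step (b) does not repair this, because the principle you invoke -- a prime ideal contained in $\ker m$ that contains an element of order exactly $1$ at a point $m$ of a non-trivial part must equal $\ker m$ -- is false for general prime ideals, and for closed ones it is again essentially the theorem of \cite{gomo}, so the argument is circular. To see the failure without closedness, let $b$ be an interpolating Blaschke product with simple zeros $(z_n)$ and let $m\in \ov{\{z_n\}}\setminus \D$, so that $m$ lies in a non-trivial part and $\ord(b,m)=1$. Pick $w_n\notin\{z_k:k\in\N\}$ with $\rho(z_n,w_n)\to 0$ and let $h$ be the Blaschke product with zeros $(w_n)$; then $|h(z_n)|\leq \rho(z_n,w_n)\to 0$, so $\widehat h(m)=0$, while $h(z_n)\neq 0$ for every $n$. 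If $M_m:=\ker m$ were a minimal prime over $b\H$, then for some $k\in\N$ and some $s\in\H$ with $\widehat s(m)\neq 0$ we would have $b\mid s h^k$; since $h$ vanishes at no $z_n$ and the zeros of $b$ are simple, $b$ would divide $s$, forcing $\widehat s(m)=0$, a contradiction. Hence a minimal prime over $b\H$ contained in $M_m$ is a non-maximal prime ideal containing the order-one element $b$. So the closedness of $I$ is indispensable exactly at the step you attribute to ``Hoffman's local description'', and once the correct input \cite{gomo} is invoked, your argument collapses to the paper's one-line proof.
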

\begin{proof} 
By \cite{gomo}, every non-maximal closed prime ideal in $\H$ 
has the form $I=I(E,\H)$, where $E=Z(I)$ is a union of Gleason parts.
\end{proof}

\noindent We will now collect a few technical results, which will be
used in the proof of Proposition~\ref{prop_RM} below.

Let $g\in \H$ be zero-free. Suppose that $\|g\|_\infty\leq 1$. Then
there exists a positive measure $\mu$ on the unit circle $\mT$ such
that
$$
g(z)=g_\mu(z):=\textrm{exp}\left(\int_\T  \frac{z+\xi}{z-\xi}\;d\mu(\xi)\right).
$$
If $\xi=e^{it}$, this $\mu$ has the form 
$$
d\mu(\xi)=\log\frac{1}{|g(e^{it})|} dt +d\mu_s(\xi),
$$
where $\mu_s$ is singular with respect to Lebesgue measure on $\T$.

The following result corresponds to assertion (1.1) in
\cite[p. 170]{mo}, given there without proof.

\begin{lemma}
\label{lemma_rm}
Let $g=g_\mu\in \H$ be zero-free and suppose that 
$\|g\|_\infty\leq 1$. Then, for every $z\in \D$,
$$
|1-g(z)|\leq \frac{1+|z|}{1-|z|}\;\mu(\T).
$$
\end{lemma}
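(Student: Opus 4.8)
The plan is to exploit the exponential representation $g(z)=\exp\big(\int_\T \tfrac{z+\xi}{z-\xi}\,d\mu(\xi)\big)$ together with the elementary bound $|1-e^w|\le |w|\,e^{|\mathrm{Re}\, w|}$ valid for all $w\in\mathbb C$, or more simply $|1-e^w|\le |w|$ when $\mathrm{Re}\, w\le 0$. Since $\|g\|_\infty\le 1$ and $g$ is zero-free with $\mu\ge 0$, we have $\mathrm{Re}\big(\int_\T \tfrac{z+\xi}{z-\xi}\,d\mu(\xi)\big)=\int_\T \tfrac{1-|z|^2}{|z-\xi|^2}\,d\mu(\xi)\le 0$ — wait, this Poisson kernel is positive, so in fact $\log|g(z)|=-\int_\T \tfrac{1-|z|^2}{|z-\xi|^2}\,d\mu(\xi)\le 0$, confirming $|g|\le 1$; the relevant exponent $w:=\int_\T \tfrac{z+\xi}{z-\xi}\,d\mu(\xi)$ has $\mathrm{Re}\,w\le 0$. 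Hence $|1-g(z)|=|1-e^w|\le |w|$.

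First I would record the pointwise estimate on the integrand: for $|z|<1$ and $|\xi|=1$,
$$
\Big|\frac{z+\xi}{z-\xi}\Big|\le \frac{|z|+1}{|z-\xi|}\le \frac{1+|z|}{1-|z|},
$$
using $|z-\xi|\ge 1-|z|$ by the reverse triangle inequality. Then I would integrate this against the positive measure $\mu$ to obtain
$$
|w|=\Big|\int_\T \frac{z+\xi}{z-\xi}\,d\mu(\xi)\Big|\le \int_\T \Big|\frac{z+\xi}{z-\xi}\Big|\,d\mu(\xi)\le \frac{1+|z|}{1-|z|}\,\mu(\T).
$$
Combining with $|1-g(z)|\le |w|$ gives exactly the claimed inequality.

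The only genuine point to verify carefully is the inequality $|1-e^w|\le |w|$ for $\mathrm{Re}\,w\le 0$. This follows from writing $1-e^w=-\int_0^1 w\,e^{tw}\,dt$, so $|1-e^w|\le |w|\int_0^1 e^{t\,\mathrm{Re}\,w}\,dt\le |w|$ since $\mathrm{Re}\,w\le 0$ makes each $e^{t\,\mathrm{Re}\,w}\le 1$. I expect this to be the only step needing a word of justification; everything else is a direct triangle-inequality estimate on a positive measure, so there is no serious obstacle. I would also note that $\mu(\T)=\log\tfrac{1}{|g(0)|}$ is finite because $g$ is zero-free and bounded by $1$ (indeed $g(0)\ne 0$), so the right-hand side is a meaningful finite quantity.
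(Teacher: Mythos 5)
Your proof is correct and takes essentially the same route as the paper's: the elementary bound $|1-e^w|\le |w|$ for $\mathrm{Re}\,w\le 0$ (proved by integrating $e^\zeta$ over the segment from $0$ to $w$), combined with the pointwise estimate $\bigl|\frac{z+\xi}{z-\xi}\bigr|\le\frac{1+|z|}{1-|z|}$ integrated against the positive measure $\mu$. The only blemish is the momentary sign slip in your display for the real part of the exponent, which equals $\int_\T\frac{|z|^2-1}{|z-\xi|^2}\,d\mu(\xi)\le 0$; you self-correct, and the conclusion $\mathrm{Re}\,w\le 0$ that the argument actually uses is right.
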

\begin{proof}
First we note that for $w\in \C$ with  ${\rm Re}\; w\leq 0$,
$$|1-e^w|=\left|\int_{[0,w]}e^\zeta\;d\zeta\right|\leq  |w|.$$
Since $\displaystyle {\rm Re}\; \frac{z+\xi}{z-\xi}=
\frac{|z|^2-1}{\;|z-\xi|^2\;}\leq 0 $ and $\mu\geq 0$, 
we deduce that
$$
\phantom{aaaaaaaaaaa}
|1-g(z)|\leq\left|\int_\T  \frac{z+\xi}{z-\xi} \;d\mu(\xi)\right|
\leq
\frac{1+|z|}{1-|z|}\mu(\T).\phantom{aaaaaaaaaaa}\qedhere
$$
\end{proof}

\begin{proposition}
\label{prop_RM}
If $ P$ is a prime ideal in $\H$, then $ki( P)=\{0\}$ if and only if 
$P$ is a maximal ideal of the form $ M_{z_0}$ for some $z_0\in \D$.
\end{proposition}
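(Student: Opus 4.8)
The plan is to treat the two implications separately. Sufficiency is easy: if $P=M_{z_0}$ for some $z_0\in\D$, then any $f\in ki(P)$ lies in $M_{z_0}^{\,n}$ for every $n$, hence vanishes at $z_0$ to order at least $n$ for all $n$, so $f\equiv 0$ and $ki(P)=\{0\}$. (Equivalently, $P(z_0)=\D$ and $\ov\D=M(\H)$, so the theorem above on maximal ideals of $\H$ gives $ki(M_{z_0})=I(M(\H),\H)=\{0\}$.) The substance is the converse, which I would reduce to a statement about non-maximal primes and then settle by exhibiting a nonzero element of $\bigcap_nP^n$.

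For necessity, let $P$ be prime with $ki(P)=\{0\}$; the zero ideal being an obvious exception, take $P\neq\{0\}$. A nonzero prime ideal of $\H$ is exactly one of the following: $M_{z_0}$ for some $z_0\in\D$; $M_m$ for some $m\in M(\H)\setminus\D$; or non-maximal. If $P$ is non-free, pick $z_0\in Z_\D(P)$, so $P\ss M_{z_0}$; choosing $0\neq f\in P$ and writing $f=b_{z_0}^{\,k}g$, where $b_{z_0}(z)=\tfrac{z-z_0}{1-\ov{z_0}z}$ and $g(z_0)\neq 0$, gives $g\notin M_{z_0}\supseteq P$, so primeness forces $b_{z_0}\in P$; since $\langle b_{z_0}\rangle=M_{z_0}$ this yields $P=M_{z_0}$, which is the asserted conclusion (and is consistent, since then $ki(P)=\{0\}$ by Observation~\ref{free}). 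If $P=M_m$ with $m\notin\D$, then by the theorem above $ki(P)=I(\ov{P(m)},\H)\neq\{0\}$, because $\ov{P(m)}$ is a proper subset of $M(\H)$ --- contradicting $ki(P)=\{0\}$. So it remains to prove: every non-maximal nonzero prime ideal $P$ has $ki(P)\neq\{0\}$.

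For that I would fix $0\neq f\in P$, normalise $\|f\|_\infty\le 1$, and factor $f=B\,h$, where $B$ is the Blaschke part of $f$ and $h=S\,O$ (singular inner times outer) is its zero-free part, so $\|h\|_\infty\le 1$. Primeness of $P$ forces $h\in P$ or $B\in P$. If $h\in P$, then $h$, being a nonzero element of a proper ideal, is nonconstant, so up to a unimodular constant $h=g_\mu$ for some positive measure $\mu\neq 0$ on $\T$; for each $n$ the function $g_{\mu/n}$ (defined as $g_\mu$ but with $\mu$ replaced by $\mu/n$) lies in $\H$ with $\|g_{\mu/n}\|_\infty\le 1$ and satisfies $(g_{\mu/n})^n=g_\mu\in P$, so primeness gives $g_{\mu/n}\in P$, whence $g_\mu\in P^n$ for every $n$; thus $0\neq h\in ki(P)$, a contradiction. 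If instead $B\in P$, then $B$ cannot be a finite Blaschke product --- a factor $b_z\in P$ would give $M_z=\langle b_z\rangle\ss P$, so $P=M_z$ would be maximal --- and hence $B$ is an infinite Blaschke product lying in $P$.

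This last situation is the main obstacle, and the step I expect to be hardest: the $n$-th-root device above fails because a Blaschke product carries no singular or outer factor to extract. My plan here is to exploit that $P$ is an ideal (so $T^{1/2^k}B\in P$ for every singular inner $T$) together with primeness applied to $T^{1/2^{k-1}}B=T^{1/2^k}\cdot\big(T^{1/2^k}B\big)$: this either lands a nonconstant zero-free $T^{1/2^k}$ in $P$, reducing to the previous case, or keeps all $T^{1/2^k}B$ in $P$, at which point Lemma~\ref{lemma_rm} --- which forces $T^{1/2^k}\to 1$ locally uniformly and controls the partial products $T^{1-2^{-n}}$ --- is brought to bear; and, combining this with a decomposition $B=\prod_{k\ge 1}B_{C_k}$ of $B$ into countably many infinite Blaschke sub-products, I would show that primeness of $P$ drives unboundedly many $B_{C_k}$ into $P$, so that $B=B_{C_1}\cdots B_{C_{n-1}}\cdot\big(B/(B_{C_1}\cdots B_{C_{n-1}})\big)\in P^n$ for every $n$ (the last factor being a multiple of some $B_{C_j}\in P$), placing $0\neq B\in\bigcap_nP^n=ki(P)$ --- the desired contradiction. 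Granting this, all of the non-conclusion cases are impossible, $P$ is non-free, and therefore $P=M_{z_0}$ for some $z_0\in\D$.
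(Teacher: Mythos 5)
Most of your outline coincides with the paper's proof and is sound: the sufficiency argument, the non-free case (factor out $b_{z_0}$ and use primeness to get $P=M_{z_0}$), the appeal to the earlier theorem for free maximal ideals, and the zero-free case. In fact your treatment of the zero-free case is \emph{more} direct than the paper's: from $g_{\mu/n}\in P$ you immediately get $g_\mu=(g_{\mu/n})^n\in P^n$, whereas the paper builds an auxiliary infinite product $\prod_k g^{1/n_k}$ via Lemma~\ref{lemma_rm}. The problem is exactly the step you flagged as hardest: a free non-maximal prime $P$ containing an infinite Blaschke product $B$.

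Your plan there is to show that $B$ itself lies in $\bigcap_n P^n$, by arguing that primeness forces unboundedly many disjoint sub-products $B_{C_k}$ of $B$ into $P$. This claim is false, and the intended conclusion $B\in ki(P)$ can fail outright. Take an interpolating sequence $(z_n)$, a free ultrafilter $\mathcal{U}$ on $\N$, and $P:=\{f\in\H:\{n:f(z_n)=0\}\in\mathcal{U}\}$. This is a free, non-maximal prime ideal (it is properly contained in $\ker m$, where $m$ is the $\mathcal{U}$-limit of $(z_n)$ in $M(\H)$), and it contains the interpolating Blaschke product $B$ with zero sequence $(z_n)$. A sub-product $B_C$ belongs to $P$ if and only if its index set $C$ belongs to $\mathcal{U}$, so in any decomposition $B=\prod_k B_{C_k}$ into sub-products with pairwise disjoint zero sets \emph{at most one} factor lies in $P$; moreover $B\notin ki(P)$, since $\ord(B,m)=1$ while Lemma~\ref{ord} forces every element of $ki(P)$ to have infinite order at each point of $Z(P)$. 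The singular-factor device is also vacuous here: $T^{1/2^k}B\in P$ only because $P$ is an ideal, and $P$ contains no zero-free function at all, so that branch never produces anything. The repair is the paper's: do not aim at $B$, but at a new Blaschke product built from \emph{tails}. Every tail $B_k$ of $B$ does lie in $P$ (a finite Blaschke product cannot belong to a free prime, so primeness applied to $B=(\textrm{finite Blaschke product})\cdot B_k$ gives $B_k\in P$); choosing the tails so that $\sum_{n\geq N_k}(1-|z_n|^2)\leq 2^{-k}$ makes the union of their zeros a Blaschke sequence, and then $B_*:=\prod_{k\geq 1}B_k$ is a Blaschke product lying in $P^n$ for every $n$, giving $0\neq B_*\in ki(P)$.
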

\begin{proof}
We have already seen that $ki( M_{z_0})=\{0\}$. If 
$Z_{\D}( P)\inter \D\not=\emp$, then it easily follows that $ P\ss
(z-z_0)\H$ for all $z_0 \in Z_\D(P) \cap \D$.  Due to primeness
$z-z_0\in P$ (each $f \in P$ factors as $f=(z-z_0)^n g$, where $n$ is
the order of the zero $z_0$, but then $g \not\in P$, so $z-z_0 \in
P$), and so $ P= M_{z_0}$ again. Now suppose that $Z_\D( P)=\emp$;
that is, $ P$ is a free prime ideal.  We show that $ki( P)$ contains
elements different from the zero function.

\medskip 
  
\noindent {\bf Case $1^\circ$} Suppose that $ P$ contains a Blaschke
product $B$, with zero sequence $(z_n)$ (multiplicities included). In
particular,
$$
\sum_{n=1}^\infty (1-|z_n|^2)<\infty.
$$
For each $k$, choose a tail of the sequence so that
$$
\sum_{n=N_k}^\infty (1-|z_n|^2) \leq \frac{1}{2^{k}}.
$$
Let $B_k$ be the Blaschke product associated with these zero
sequences. Since $B_k$ differs from $B$ only by finitely many zeros,
the freeness of $ P$ implies that $B_k\in P$ (otherwise we would have
$z-z_0 \in P$, hence $P=M_{z_0}$ again). 
Since
$$
\sum_{k=1}^\infty \sum_{j=N_k}^\infty (1-|z_j|^2)<\infty,
$$
the collection of all zeros of all $B_k$ is a Blaschke sequence again.
Hence, due to absolute convergence of the associated products, any
reordering converges again, and so
$$
B_*:=\prod_{k=1}^\infty B_k
$$ 
is a Blaschke product again.  Clearly, $B_*\in ki( P)$.

\medskip 

\noindent {\bf Case $2^\circ$} Let $Bg\in P$, where $g$ is a zero-free
function, and we may assume that $\|g\|_\infty \leq 1$. Either $B\in
P$ (and we are done by the first case) or $g\in P$.  Since $g$ has
roots of any order, we see that $g^{1/n}\in P$ for every $n$. Choose
$n_k$ going to infinity so fast that
$$
\sup_{|z|\leq 1-1/k}|1-g^{1/n_k}(z)|  <\frac{1}{2^{k}} , 
$$
which is possible by Lemma~\ref{lemma_rm} above. Then the infinite
product
$$
h=\prod_{k=1}^\infty  g^{1/n_k}
$$
converges locally uniformly to a function $h\in \H$. Clearly, 
$h\in ki( P)$.
\end{proof}

\goodbreak 

\begin{remark}
The proof above shows the following:
\begin{enumerate}
\item If $I$ is any free ideal in $\H$ containing a Blaschke product,
  then $ki(I)\not=\{0\}$.
\item If $I$ is any free ideal in $\H$ containing a zero-free function
  $g$ and all of its roots, then $ki(I)\not=\{0\}$.
\end{enumerate}
\end{remark}

\noindent Let us also remark that there do exist free ideals with
$ki(I)=\{0\}$, as demonstrated below.

\begin{observation}
$ki( S\H)=\{0\}$, where $S$ is the atomic inner function 
$$
S(z)=\textrm{exp}\Big(-\frac{1+z}{1-z}\Big),\quad z\in \D.
$$ 
\end{observation}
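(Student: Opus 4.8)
The plan is to use that $S\H$ is a \emph{principal} ideal, so that $(S\H)^n=S^n\H$ and hence $ki(S\H)=\bigcap_{n\ge 1}S^n\H$, and then to exploit the dichotomy between the boundary and the interior behaviour of the inner function $S$: one has $|S|=1$ a.e.\ on $\T$, whereas $|S(z)|<1$ for \emph{every} $z\in\D$. Indeed, since $\mathrm{Re}\,\dfrac{1+z}{1-z}=\dfrac{1-|z|^2}{|1-z|^2}>0$ on $\D$, we get $|S(z)|=\textrm{exp}\!\big(-\tfrac{1-|z|^2}{|1-z|^2}\big)<1$, so $|S(z)|^n\to 0$ as $n\to\infty$ for each fixed $z\in\D$.

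Now take $f\in ki(S\H)$; the claim is that $f\equiv 0$. For every $n\in\N$ there is a $g_n\in\H$ with $f=S^n g_n$. Since $|S|=1$ a.e.\ on $\T$, the boundary functions obey $|g_n|=|f|$ a.e., so $\|g_n\|_\infty=\|f\|_\infty=:C<\infty$. By the maximum modulus principle (equivalently, the $H^\infty$-norm equals the essential supremum of the boundary values), $|g_n(z)|\le C$ for all $z\in\D$, whence
$$
|f(z)|=|S(z)|^n\,|g_n(z)|\le C\,|S(z)|^n\qquad(z\in\D,\ n\in\N).
$$
Letting $n\to\infty$ and using $|S(z)|<1$ forces $f(z)=0$ for every $z\in\D$, that is, $f\equiv 0$. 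Hence $ki(S\H)=\{0\}$.

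There is no genuinely hard step here; the only point requiring a word of justification is the equality $\|g_n\|_\infty=\|f\|_\infty$, which is just the standard fact that dividing an $H^\infty$-function by an inner function preserves the sup-norm. As an alternative one could argue through the canonical inner--outer factorization: if $0\ne f\in S^n\H$ for all $n$, then the singular inner factor of $f$ must carry a point mass at $1$ of weight at least $n$ for every $n$, which is impossible since a nonzero element of $\H$ has finite singular measure. I would nonetheless present the elementary pointwise estimate above, since it avoids any appeal to factorization theory.
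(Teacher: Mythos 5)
Your proof is correct and takes essentially the same route as the paper: both reduce a nonzero $f\in ki(S\H)$ to a factorization $f=S^n g_n$ for every $n$ with a cofactor $g_n$ bounded uniformly in $n$, and then use that $|S(z)|^n\to 0$ at each point of $\D$ to force $f\equiv 0$. The only difference is how the uniform bound is obtained -- the paper passes to the inner factor of $f$, so its cofactors are inner and bounded by $1$, while you invoke norm preservation under division by the inner function $S^n$ -- a minor, slightly more elementary variant (your remarked factorization alternative is in fact closer to the paper's argument).
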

\begin{proof}
If $f\in ki(S\H)$, $f\not\equiv 0$, then, for every $n$,
$$
f=\sum_{k=1}^m \prod_{j=1}^n (h_{kj} S)= h_nS^n .
$$
In particular, $S^n$ divides the inner factor $\varphi$ of $f$ for
every $n$, say $\varphi=u_nS^n$ for inner functions $u_n$.  This is
impossible though, because $S^n$ goes to zero locally uniformly in
$\D$, and so due to the boundedness of $u_n$, $\varphi =0$.
\end{proof}

\begin{observation}
Let $I$ be  a countably generated  free prime ideal in $\H$. Then $ki(I)=I$.
\end{observation}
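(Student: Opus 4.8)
The inclusion $ki(I)\subseteq I$ is automatic, so what must be shown is $I\subseteq ki(I)$; since each $I^n$ is an ideal and $I=\langle g_1,g_2,\dots\rangle$, this amounts to $I=I^2$ (for then $I^n=I$ for all $n$, hence $ki(I)=I$), i.e.\ to showing that every $f\in I$ lies in $I^2$. The plan is to write a given $f\in I$, normalised so that $\|f\|_\infty\le 1$, as $f=B_fS_fF_f$ (Blaschke product times singular inner function times outer function) and to use primeness of $I$: from $f\in I$ we get $B_fS_f\in I$ or $F_f\in I$, and in the first case $B_f\in I$ or $S_f\in I$. If $F_f\in I$ or $S_f\in I$, the device from Case $2^\circ$ of the proof of Proposition~\ref{prop_RM} applies directly: an outer (resp.\ singular inner) function $h\in I$ with $\|h\|_\infty\le 1$ has all of its roots $h^{1/n}$ again in $\H$, hence in $I$ by primeness, so $h\in I^n$ for every $n$, and therefore $f\in I^n$ because $I^n$ is an ideal. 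Also, if $B_f$ is a nonconstant \emph{finite} Blaschke product lying in $I$, then by primeness some factor $z-z_0$ lies in $I$, whence $I=M_{z_0}$, contradicting the freeness of $I$. So the whole statement reduces to the claim that every \emph{infinite} Blaschke product $B\in I$ lies in $I^n$ for every $n\in\N$.

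For this claim I would argue as in Case $1^\circ$ of the proof of Proposition~\ref{prop_RM}. Let $(z_k)$ be the zero sequence of $B$, and for $p\in\N$ let $B^{[p]}$ be the Blaschke product with zeros $z_p,z_{p+1},\dots$. Freeness and primeness force every tail $B^{[p]}$ into $I$ (otherwise the finite Blaschke quotient $B/B^{[p]}$ lies in $I$, again giving $I=M_{z_0}$). Since $I^n$ is an ideal and $B$ differs from each $B^{[p]}$ only by a finite Blaschke factor, it is enough to place \emph{some} tail of $B$ inside $I^n$. Here countable generation enters: each $B^{[p]}\in I=\langle g_1,g_2,\dots\rangle$ is a finite $\H$-combination of finitely many of the $g_j$, and, running an exhaustion over $p$ and over the generators together with the rapid-decay device $\sum_k\sum_{j\ge N_k}(1-|z_j|^2)<\infty$ from Proposition~\ref{prop_RM} (so that the selected blocks of zeros have associated Blaschke products whose infinite product converges), one would assemble a factorisation of a tail of $B$ into $n$ infinite Blaschke products each lying in $I$; that tail, and hence $B$, then lies in $I^n$.

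Putting the two parts together, every $f\in I$ lies in $I^n$ for all $n$, so $ki(I)=I$. I expect the construction in the previous paragraph to be the main obstacle: primeness alone only guarantees that \emph{one} of the two Blaschke factors of a split product lies in $I$, and forcing all $n$ factors simultaneously into $I$ is precisely where the countable-generation hypothesis has to be used essentially. That it cannot be omitted is clear from a maximal ideal $M=M_m$ whose Gleason part $P(m)$ is not a singleton: such an $M$ is free and prime and contains an interpolating Blaschke product $b$ with $b\notin M^2$, so $ki(M)\ne M$, and hence $M$ is not countably generated.
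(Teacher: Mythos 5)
There is a genuine gap, and it sits exactly where you yourself locate "the main obstacle": the case of an infinite Blaschke product $B\in I$. Your plan there is only a hope, not an argument. The device of Case $1^\circ$ of Proposition~\ref{prop_RM} produces, from the tails $B^{[p]}\in I$, a \emph{single} new Blaschke product $B_*=\prod_k B_k$ lying in $ki(P)$; it does not produce a factorization of $B$ (or of any tail of $B$) into $n$ factors each lying in $I$, and "running an exhaustion over $p$ and over the generators" does not explain how writing each $B^{[p]}$ as a finite $\H$-combination of the generators $g_j$ would ever yield such a factorization -- membership of $B^{[p]}$ in the ideal gives no control over how $B^{[p]}$ \emph{factors}. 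Note also that everywhere else your argument never uses countable generation at all (the outer/singular-inner root trick and the finite-Blaschke contradiction with freeness work for any free prime ideal), so the entire weight of the hypothesis rests on the unproved paragraph; since the statement is false without countable generation (as your own closing remark shows), this is precisely the part that cannot be waved through.

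The paper's proof avoids this case analysis entirely by invoking the structure theorem for countably generated prime ideals in $\H$ (\cite{gomo05,mo}): a countably generated free prime ideal is necessarily generated by the roots $\{S_\alpha^{1/n}:n\in\N\}$ of an atomic singular inner function $S_\alpha$, so that $I=\{h\,S_\alpha^{1/p}: p\in\N,\ h\in\H\}$, and then the factorization $f=h\,S_\alpha^{1/(pn)}\cdots S_\alpha^{1/(pn)}$ gives $f\in I^n$ at once. In particular such an ideal contains no Blaschke product at all (a Blaschke product has no singular inner factor, so it cannot be a multiple of $S_\alpha^{1/p}$), which is why the troublesome case in your scheme is vacuous -- but knowing that is essentially equivalent to the structure theorem, which you neither cite nor prove. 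To repair your proof, either quote that theorem (at which point your case analysis becomes unnecessary) or supply a genuine argument that a countably generated free prime ideal cannot contain an infinite Blaschke product, or that such a Blaschke product would lie in $I^n$ for every $n$; as it stands, neither is done.
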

\begin{proof}
By \cite{gomo05,mo}, $I$ is generated by $\{S_\alpha(z)^{1/n}: n\in \N\}$,
where 
$$
S_\alpha(z)=\textrm{exp}\left(-\frac{\alpha+z}{\alpha-z}\right)
$$
for some $\alpha\in T$. But $I=\{h\; S_\alpha ^{1/n}: n\in\N, h\in \H\}$.
Hence, given $n\in\N$, every $f=hS_\alpha^{1/p}\in I$ can be factorized as
$$
f=h \underbrace{S_\alpha^{1/(pn)}\dots S_\alpha^{1/(pn)}}_{n-times}.
$$
So $f\in I^n.$
\end{proof}

\end{document}